\theoremstyle{plain}
\newtheorem{theorem}{Theorem}
\newtheorem{corollary}[theorem]{Corollary}
\newtheorem*{main}{THEOREM}
\newtheorem{lemma}[theorem]{Lemma}
\newtheorem{proposition}[theorem]{Proposition}
\theoremstyle{definition}
\newtheorem{example}{Example}
\newcommand{\mL}{\mathbb L}
 \newcommand{\mH}{\mathbb H}
 \newcommand{\E}{\mathbb E}
\newcommand{\PP}{\mathbb P}
 \newcommand{\N}{\mathbb N}
 \newcommand{\R}{\mathbb R}
 \newcommand{\mS}{\mathbb S}
 \newcommand{\T}{\mathbb T}
\newcommand{\Z}{\mathbb Z}
\newcommand{\cA}{\mathcal A}
\newcommand{\cC}{\mathcal C}
 \newcommand{\cM}{\mathcal M}
\newcommand{\cP}{\mathcal P}
\newcommand{\cS}{\mathcal S}
 \newcommand{\tH}{\tilde{H}}
\newcommand{\tL}{\tilde{L}}
\newcommand{\xb}{\bar{x}}
\newcommand{\Xb}{\bar{X}}
\newcommand{\tb}{\bar{t}}
\newcommand{\tcM}{\widetilde{\mathcal M}}
\newcommand{\tcA}{\tilde{\mathcal A}}
\newcommand{\gab}{\bar{\gamma}}
\newcommand{\lab}{\bar{\lambda}}
\newcommand{\af}{\alpha}
 \newcommand{\ga}{\gamma}
\newcommand{\dga}{\dot{\gamma}}
 \newcommand{\Ga}{\Gamma}
 \newcommand{\de}{\delta}
\newcommand{\ep}{\varepsilon}
 \newcommand{\lam}{\lambda}
 \newcommand{\om}{\omega}
 \newcommand{\Om}{\Omega}
 \newcommand{\te}{\theta}
\newcommand{\lb}{\llbracket}
\newcommand{\rb}{\rrbracket}
\newcommand{\entre}{\setminus}
\newcommand{\sop}{\operatorname{supp}}
\begin{document}
\title[Vanishing viscosity for periodic Hamilton-Jacobi equations]
{Vanishing viscosity limits for space-time periodic 
Hamilton-Jacobi equations}
\author[E. Guerra]{Eddaly Guerra}
\address{ Instituto de Matem\'aticas, UNAM. Ciudad Universitaria C. P. 
          04510, Cd. de M\'exico, M\'exico.}
\email{eddaly@yahoo.com.mx}
\author[H. S\'anchez-Morgado]{H\'ector S\'anchez-Morgado}
 \email{hector@math.unam.mx}
\subjclass[2000]{37J50, 49L25, 70H20} 
\keywords{Viscosity solution, Hamilton-Jacobi equation,
 periodic Hamiltonian, Aubry set.}
\begin{abstract}
Extending previuos results (\cite{AIPS},\cite{Be}), we study the vanishing 
viscosity limit of solutions of space-time periodic
Hamilton-Jacobi-Belllman equations, assuming that the ``Aubry set''
is the union  of a finite number of hyperbolic periodic orbits of the
Hamiltonian flow.  
\end{abstract}
 \maketitle

\section{Introduction}
 Let $M$ be a compact manifold and
 $H:T^*M\times\R\to\R$ be a $C^k$ Hamiltonian $k\ge 3$, 
satisfying the standard hypotheses 
 \begin{description}\label{hypo}
 \item [Convexity] The Hessian $H_{pp}(x,p,t)$ is positive definite.
\item [Superlinearity]
 $$
 \lim_{|p|\to \infty}\frac{H(x,p,t)}{|p|}=\infty,
 $$
 uniformly on $x,t$.
\item [Periodicity] The Hamitonian is also time periodic
 $$H(x,p,t+1)=H(x,p,t),$$
 for all $x,p,t$.
\item [Completeness] The Hamitonian flow $\phi^*_t$ in $T^*M\times\mS^1$
is complete.
\end{description}
Let $L:TM\times\R\to\R$ be the Lagrangian associated to the 
Hamiltonian: 
\[ L(x,v,t)=\max_ppv-H(x,p,t).\] 
For our main result we will assume that $M$ is the $d$-dimensional
torus $\T^d$ and the Hamiltonanin satisfies the following  growth condition:

There is $K>0$ such that for all $x,p,t$ with $|p|\ge K$  
\begin{equation}\label{growth}
(H_p\cdot p-H+\inf_{(x,t)}H(x,0,t))K-|H_x|\ge 0.
\end{equation}
A natural example that satisfies the hypotheses is given by
\[H(x,p)=\frac 12|p+P|^2+V(x,t)\] 
with $V\in C^k(\T^{d+1})$, since in this case the flow
is complete and hypothesis \eqref{growth} reads
 \[(\frac 12|p|^2-\frac 12|P|^2-V(x,t)+\inf_{(x,t)}V(x,t))K-|V_x(x,t)|\ge 0\]
for $|p|\ge K$.

For $c\in\R$ consider the Hamilton-Jacobi equation 
\begin{equation}\label{HJT}
u_t+H(x,Du,t)=c.
\end{equation}
It is know (\cite{CIS}) that there is only one value $c=c(L)$, the so
called critical value, such that \eqref{HJT} has a time periodic
viscosity solution. 

Consider also the Hamilton-Jacobi-Bellman equation
\begin{equation}\label{HJB}
\phi_t+\ep\Delta\phi(x,t)+H(x, D\phi(x,t),t) =c(\ep)
\end{equation}
As for equation \eqref{HJT}, there is only one constant
$c(\ep )$ such that \eqref{HJB} admits solutions. 
However, this time the solution is unique
up to an additive constant (\cite{bar}), and we denote it $\phi_\ep$.
We study the behaviour of $\phi_\ep$ as $\ep$ goes to zero.
We prove that the family
$(\phi_\ep)_{\ep>0}$ is equi-Lipschitz, so that we can extract
subsequences which converge uniformly (see Lemma \ref{lip}). 
By the stability theorem for viscosity solutions (\cite{CEL84},
\cite{Barles}, \cite{Bardi}), limits as $\ep\to 0$ of such
subsequences have to be viscosity solutions of equation \eqref{HJT}.

We shall assume that the ``Aubry set'' (section \ref{sec:preliminar}) of
the Lagrangian system, is the union of a finite number of hyperbolic
periodic orbits of the Euler-Lagrange flow.
Extending previuos results (\cite{AIPS},\cite{Be}), we describe the
limits in terms of the orbits in the Aubry set that minimize,
the normalized integral along the orbit, of the Laplacian  of the
corresponding ``Peierls barrier'' (section \ref{sec:preliminar}).
In particular we prove that the limit is unique if there is only 
one orbit in the Aubry set that minimizes that normalized integral.

\section{Preliminaries and statement of main result}
\label{sec:preliminar}
Define the action of an absolutely  continuous curve  $\ga:[a,b]\to M$ as
 $$A_L(\ga):=\int_{a}^{b}L(\ga(\tau),\dot\ga(\tau),\tau) d\tau $$
A curve $\ga:[a,b]\to M$ is {\em closed } if
$\ga(a)=\ga(b)$ and $b-a\in\Z$.  The critical value can be defined as
\[c(L):= \min \{k\in\R:\forall\ga\text{ closed }A_{L+k}(\ga) \ge 0\}\]
Let ${\cP}(L)$ be the set of probabilities on the Borel $\sigma$-algebra 
of $TM\times\mS^1$ that have compact support and are invariant under
the Euler-Lagrange flow. Then
\[c(L)= -\min \{\int Ld\mu : \mu\in\cP(L)\}.\]
The Mather set is defined as 
\[\tcM:=
\overline{\bigcup\{\sop\mu:\mu\in\cP(L),\int Ld\mu=-c(L)\}}.\]
 For $a\le b$, $x,y\in M$ let $\cC(x,a,y,b)$ be the set 
 of absolutely continuous curves $\ga:[a,b]\to M$ with $\ga(a)=x$ and
 $\ga(b)=y$. 
For $a\le b$ define $F_{a,b}:M\times M\to\R$ by  
  $$F_{a,b}(x,y):= \min\{A_L(\ga):\ga\in\cC(x,a,y,b)\}.$$

For $t\in\R$ let $[t]$ be the corresponding point in
${\mS}^1$ and $\lb t\rb$ be the integer part of $t$.
 Define the {\em action potential}
$\Phi:M\times\mS^1\times M\times\mS^1\to\R$ by  
 \[\Phi(x,[s],y,[t]):=\min\{F_{a,b}(x,y)+c(L)(b-a):[a]=[s], [b]=[t]\},\]
 and the {\em Peierls barrier} $h:M\times\mS^1\times M\times\mS^1\to\R$ by
\begin{equation}
  \label{eq:barrier}
 h(x,[s],y,[t]):=\liminf_{\lb b-a\rb\to\infty}
\bigl(F_{a,b}(x,y)+c(L)(b-a)\bigr)_{[a]=[s], [b]=[t]} 
\end{equation}
 We have $-\infty<\Phi\le h<\infty$.
The Lagrangian is called regular if the $\liminf$ in
\eqref{eq:barrier} is a $\lim$ and in that case, for each $s,t\in\R$
the convergence of the sequence $(F_{a,b})_{[a]=[s], [b]=[t]}$ is uniform. 

The critical value is the unique number $c$ such that \eqref{HJT}
has viscosity solutions $u:M\times\mS^1\to\R$. 
In fact (\cite{CIS}), for any $p$ the functions $z\mapsto h(p,z)$ 
and $z\mapsto-h(z,p)$ are respectively 
{\em backward} and {\em forward} viscosity solutions of \eqref{HJT}. 
Set $c=c(L)$ and let $\cS^- (\cS^+)$ be the set of {\em backward}
({\em forward}) viscosity solutions of \eqref{HJT}.
A subsolution of \eqref{HJT} always means a viscosity subsolution.

For a curve $\ga:I\to M$ we denote $\gab(t)=(\ga(t),[t])$
and $\Ga(t)=(\ga(t),\dga(t),[t])$. We say that $\ga:I\to M$ 
{\em calibrates} a subsolution
$u:M\times\mS^1\to\R$ of \eqref{HJT} if 
\[u(\gab(b))-u(\gab(a))=A_{L+c}(\ga|[a,b])\]
for any $[a,b]\subset I$. In such a case, $u$ is 
differentiable at $\gab(t)$ for any $t\in I^0$. 
Moreover $(\ga(t),Du(\gab(t)),[t])$ is
on the Hamiltonian orbit projecting on $\ga(t)$.  

If $u\in\cS^- (\cS^+)$, for any 
$(x,[s])\in M\times\mS^1$ there is $\ga:]-\infty,s]\to M$
($\ga:[s,\infty[\to M$) that calibrates $u$ and $\ga(s)=x$.
 
A pair $(u_-,u_+)\in \cS^-\times\cS^+$
is called {\em conjugated} if $u_-=u_+$ on $\cM=\pi(\tcM)$.
For such a pair $(u_-,u_+)$, we define $I(u_-,u_+)$ as the set where
$u_-$ and $u_+$ coincide. 

\begin{proposition}\label{inq}
Let $(x,v,[s])\in\tcM$ and $u$ be a viscosity subsolution of
\eqref{HJT}, then for any $t\leq\tau$ we have
\[u(\gab(\tau))-u(\gab(t))=A_{L+c}(\ga|[t,\tau]) \quad
 \forall t\leq\tau. \]
where $\Ga(t)=\phi_{t-s}(x,v,[s])$ 
\end{proposition}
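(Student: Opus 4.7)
The strategy is to prove the nontrivial inequality by integrating a nonnegative defect against a Mather measure, then transfer the resulting equality from $\sop\mu$ to all of $\tcM$ by continuity and closure.

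First I would record the easy direction: for any viscosity subsolution $u$ of \eqref{HJT} and any absolutely continuous curve $\eta:[t,\tau]\to M$, Fenchel's inequality $H(x,p,r)\ge p\cdot v - L(x,v,r)$ combined with $u_t+H(x,Du,t)\le c$ (in the viscosity sense) yields
\[u(\bar\eta(\tau)) - u(\bar\eta(t)) \leq A_{L+c}(\eta|_{[t,\tau]}).\]
In particular this holds along the orbit $\ga$ of the proposition, so only the reverse inequality requires work.

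Fix $T>0$ and a Mather measure $\mu\in\cP(L)$, i.e.\ $\int L\,d\mu=-c$. For $(y,w,[r])\in TM\times\mS^1$ let $\eta^{(y,w,r)}$ be the Euler--Lagrange orbit with $\eta(r)=y$, $\dot\eta(r)=w$, and set
\[f_T(y,w,[r]) \;:=\; A_{L+c}\bigl(\eta^{(y,w,r)}|_{[r,r+T]}\bigr) \;-\; \bigl[u(\bar\eta^{(y,w,r)}(r+T)) - u(y,[r])\bigr].\]
By the easy direction $f_T\ge 0$ pointwise. Integrating against $\mu$, Fubini combined with $\phi_s$-invariance of $\mu$ turns the action term into $T\bigl(\int L\,d\mu+c\bigr)=0$, while $\phi_T$-invariance makes the telescoping $u$-term vanish. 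Hence $\int f_T\,d\mu=0$, so $f_T=0$ $\mu$-a.e.

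Because every subsolution of \eqref{HJT} is Lipschitz (Lemma \ref{lip}) and the Euler--Lagrange orbits depend continuously on initial data (from $H\in C^k$, $k\ge 3$), the defect $f_T$ is continuous on $TM\times\mS^1$. Consequently $\{f_T=0\}$ is closed and contains $\sop\mu$ for every Mather measure, hence contains $\tcM=\overline{\bigcup\sop\mu}$. To conclude the proposition for the given $t\le\tau$, apply this with $T=\tau-t$ to the point $\phi_{t-s}(x,v,[s])\in\tcM$, which lies in the Mather set by its flow-invariance. The one subtlety I expect is verifying that $f_T$ is genuinely continuous rather than merely lower semicontinuous: this is what allows the zero set to pass to the closure defining $\tcM$, and it rests entirely on the Lipschitz (hence continuous) character of subsolutions together with the smooth dependence of the EL flow on initial data.
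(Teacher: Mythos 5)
Your proof is correct and is essentially the standard weak KAM argument (integrate the nonnegative calibration defect against a minimizing measure, use flow-invariance to kill both the action term and the telescoping term, then pass from $\sop\mu$ to $\tcM$ by continuity and closure); the paper does not spell out a proof of Proposition \ref{inq} but defers to \cite{CIS}, where this is exactly the argument used. One cosmetic slip: the Lipschitz bound for subsolutions of \eqref{HJT} follows from the superlinearity of $H$, not from Lemma \ref{lip}, which concerns the viscous solutions $\phi_\ep$.
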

\begin{lemma}\label{dif-I}
If $(x,[s])\in I(u_-,u_+)$, there is $\ga:\R\to M$ such that
$\ga(s)=x$ and
\[u_{\pm}(\gab(\tau))-u_{\pm}(\gab(t))=A_{L+c}(\ga|[t,\tau]) \quad
 \forall t\leq\tau. \]
It follows that $u_{\pm}$ is differentiable at $(x,[s])$ and
$Du_{\pm}(x,[s])=L_v(x,v,[s])$. 
\end{lemma}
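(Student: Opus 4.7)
The plan is to construct $\ga$ by concatenating a backward calibrating curve for $u_-$ with a forward calibrating curve for $u_+$ at the common point $(x,[s])$, and then to show that this concatenation in fact calibrates \emph{both} solutions on all of $\R$. Differentiability of $u_\pm$ at $(x,[s])$ and the identity $Du_\pm(x,[s])=L_v(x,v,[s])$ will then drop out of the calibration-differentiability statement recalled just before Proposition \ref{inq}, applied at the interior time $s$.

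The concatenation itself is immediate: the existence of calibrating half-trajectories for elements of $\cS^\pm$ recalled in the excerpt yields $\ga_-:]-\infty,s]\to M$ with $\ga_-(s)=x$ calibrating $u_-$ and $\ga_+:[s,\infty[\to M$ with $\ga_+(s)=x$ calibrating $u_+$, and I set $\ga=\ga_-\cup\ga_+$. The central task is to check that $\gab(t)\in I(u_-,u_+)$ for every $t\in\R$. One inequality is immediate from the hypotheses: for $t\le s$, subtracting the subsolution bound $u_+(\gab(s))-u_+(\gab(t))\le A_{L+c}(\ga|[t,s])$ from the calibration equality $u_-(\gab(s))-u_-(\gab(t))=A_{L+c}(\ga|[t,s])$ and invoking $u_-(x,[s])=u_+(x,[s])$ yields $u_-(\gab(t))\le u_+(\gab(t))$; the mirror computation delivers the same bound along the forward half-line.

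For the reverse inequality I would appeal to the standard global comparison $u_-\ge u_+$ valid for any conjugated pair. This follows from the fact that $u_-$ (resp.\ $u_+$) is the minimum (resp.\ maximum) of $u_-(p)+h(p,\cdot)$ (resp.\ $u_+(p)-h(\cdot,p)$) over $p\in\cM$, combined with $u_-=u_+$ on $\cM$ and with $h(p,y)+h(y,p)\ge 0$ (a consequence of $h\ge\Phi$ and $\Phi(p,p)\ge 0$). Combining with the previous step gives $u_-=u_+$ along $\ga$, and substituting this identity into the half-line calibrations of $\ga_\pm$ upgrades them to calibrations of both $u_-$ and $u_+$ on the whole of $\R$, which is exactly the displayed equality. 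In particular $\ga$ is a single $C^k$ Euler--Lagrange extremal, so that $\ga_-$ and $\ga_+$ join smoothly at $s$; the calibration-differentiability property then produces differentiability of $u_\pm$ at $(x,[s])$ together with $Du_\pm(x,[s])=L_v(x,\dga(s),[s])$, taking $v=\dga(s)$.

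The delicate step is the cross-calibration: the purely local subsolution/calibration data yields only one of the two inequalities along $\ga$, so the global comparison $u_-\ge u_+$ coming from the conjugated-pair structure cannot be dispensed with. Without it one could not rule out a genuine corner of $\ga$ at $s$, and the differentiability conclusion would fail.
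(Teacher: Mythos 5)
The paper states this lemma without proof (it is quoted as a standard weak KAM fact, cf.\ \cite{CIS}, \cite{F}), so there is no in-text argument to compare against; your reconstruction is the canonical one and is essentially correct: concatenate a backward calibrating curve of $u_-$ with a forward calibrating curve of $u_+$ at $(x,[s])$, show the concatenation stays in $I(u_-,u_+)$, transfer each half-calibration to the other function, and deduce smoothness of the junction from minimality so that the differentiability statement recalled before Proposition \ref{inq} applies at the interior time $s$. The one step you justify loosely is the global comparison $u_-\ge u_+$: the inequality $h(p,y)+h(y,p)\ge 0$ by itself does not assemble the two representation formulas into that comparison. What does work is domination: for $q\in\cM$ one has $u_-(q)-u_-(y)\le\Phi(y,[t_y],q)\le h(y,[t_y],q)$, hence $u_-(y,[t_y])\ge u_-(q)-h(y,[t_y],q)=u_+(q)-h(y,[t_y],q)$, and taking the maximum over $q\in\cM$ and using the representation $u_+=\max_{q\in\cM}\bigl(u_+(q)-h(\cdot,q)\bigr)$ gives $u_-\ge u_+$ everywhere (equivalently, combine $u_-(q)-u_-(p)\le h(p,q)$ with the triangle inequality $h(p,q)\le h(p,y)+h(y,q)$). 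With that repair the rest goes through exactly as you say: $u_-=u_+$ along $\ga$ upgrades both half-calibrations to calibrations of both functions, additivity of the action glues them across $s$, the glued curve is a minimizer of $A_{L+c}$ and hence a $C^k$ extremal with no corner, and $Du_\pm(x,[s])=L_v(x,\dga(s),[s])$ follows.
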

Let
\[I^*(u_-,u_+)=\{(x,Du_\pm(x,[s]),[s]): \,(x,[s])\in I(u_-,u_+)\}.\]
We may define the {\em Aubry set} either as the set \cite{B} 
\[\cA^*:=\bigcap\{I^*(u_-,u_+):
(u_-,u_+)\text{ conjugated}\}\subset T^*M\times\mS^1\]
or as its pre-image under the Legendre transformation \cite{F}
\[\tcA:=\{(x,H_p(x,p,t),[t]):(x,p,[t])\in\cA^*\}.\]
 The projection of either Aubry set in $M\times\mS^1$ is
 \[\cA=\{\,(x,[t])\in M\times\mS^1:h(x,[t],x,[t])=0\}.\]
Consider the natural projections
$\Pr:T^*M\times\mS^1\to T^*M$, $\pi^*:T^*M\to M$. 
For $u$ a subsolution of (\ref{HJT}), we define
\[I^*(H,u)=\{(x,p,[t]):\ga(s):=\pi^*\circ\Pr\circ\phi^*_{s-t}(x,p,[t])
\text{  calibrates $u$ in }\R\} \] 
 \begin{lemma}\label{u-u+}
If $u:M\times\mS^1\to\R$ is a subsolution of \eqref{HJT},
there is a conjugated pair $(u_-,u_+)$ such that $u_+\le u\le u_-$.
\end{lemma}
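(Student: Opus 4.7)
My plan is to apply Fathi's weak KAM construction: define $u_\pm$ by explicit sup/inf formulas involving $u$ and the Peierls barrier $h$, and then verify (i) the sandwich $u_+\le u\le u_-$, (ii) that $u_-\in\cS^-$ and $u_+\in\cS^+$, and (iii) the conjugacy condition $u_-=u_+$ on $\cM$.

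First I would set
\[u_-(x,[s]) := \inf_{(y,[t])}\bigl(u(y,[t])+h(y,[t],x,[s])\bigr),\qquad
  u_+(x,[s]) := \sup_{(y,[t])}\bigl(u(y,[t])-h(x,[s],y,[t])\bigr).\]
Property (i) is a direct consequence of $u$ being a subsolution: applying the subsolution inequality along an admissible curve from $(y,[t])$ to $(x,[s])$ and passing to the infimum gives $u(x,[s])-u(y,[t])\le\Phi(y,[t],x,[s])\le h(y,[t],x,[s])$, hence $u_-\ge u$; the symmetric argument yields $u_+\le u$.

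For (ii), I would use the observation recalled in the text that for each fixed $(y,[t])$ the map $(x,[s])\mapsto h(y,[t],x,[s])$ is a backward viscosity solution of \eqref{HJT} and $(x,[s])\mapsto -h(x,[s],y,[t])$ is a forward viscosity solution; so $u_-$ is an infimum of backward solutions (shifted by the constants $u(y,[t])$) and $u_+$ a supremum of forward solutions. At this stage one only gets, a priori, a backward subsolution and a forward supersolution, and upgrading them to genuine elements of $\cS^\mp$ is where the real work sits. The cleanest route is to identify $u_-$ with a fixed point of the negative Lax--Oleinik semigroup applied to $u$: since $u$ is a subsolution, $T\mapsto T^-_T u$ is monotone, and the equi-Lipschitz bounds for subsolutions plus compactness of $M\times\mS^1$ force uniform convergence as $T\to\infty$ to a fixed point, which one then checks coincides with the inf formula above. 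The time-reversed construction places $u_+$ in $\cS^+$.

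It remains to verify (iii). Since $\cM\subseteq\cA$, we have $h(x,[s],x,[s])=0$ for every $(x,[s])\in\cM$. Taking $(y,[t])=(x,[s])$ in the defining inf/sup gives $u_-(x,[s])\le u(x,[s])$ and $u_+(x,[s])\ge u(x,[s])$, which combined with the sandwich from (i) forces $u_-(x,[s])=u(x,[s])=u_+(x,[s])$ throughout $\cM$. The main obstacle in the whole argument is the upgrade step in (ii): producing an actual weak KAM solution from the naive sup/inf formula. This is where Fathi's machinery---either the Lax--Oleinik semigroup, or equivalently a direct Tonelli-compactness production of curves calibrating $u_-$ on $(-\infty,s]$ via superlinearity of $L$---does the nontrivial work.
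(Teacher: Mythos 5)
Your construction of $u_\pm$ and the sandwich argument via $u(x,[s])-u(y,[t])\le\Phi(y,[t],x,[s])\le h(y,[t],x,[s])$ coincide exactly with the paper's, and your step (ii) defers to the same external machinery the paper does (the paper cites Lemmas 9 and 18 of \cite{CIS} where you invoke the Lax--Oleinik semigroup; for a convex Hamiltonian the infimum of the backward solutions $z\mapsto u(y,[t])+h(y,[t],z)$ is in fact directly a backward solution, so either route is fine at this level of detail). Where you genuinely diverge is step (iii). The paper proves $u_-=u=u_+$ on $\cM$ by taking $(x,v,[s])\in\tcM$, using Proposition \ref{inq} to calibrate $u$ along the full Euler--Lagrange orbit through $(x,v,[s])$, extracting an accumulation point $y$ of $\ga(-n_k)$, and passing to the $\liminf$ defining $h$ to exhibit a point $(y,[0])$ at which the infimum defining $u_-$ is attained with value $u(x,[s])$. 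You instead plug the diagonal term $(y,[t])=(x,[s])$ into the inf/sup and use $h(x,[s],x,[s])=0$, which requires the inclusion $\cM\subseteq\cA$. That inclusion is a standard fact of Aubry--Mather theory (and is available from \cite{CIS}), but it is not stated or proved anywhere in this paper, so if you use it you should cite it explicitly; note that the paper's orbit argument is precisely a self-contained substitute for it, derived from Proposition \ref{inq}. Granting that inclusion, your argument is correct and even slightly stronger, since it gives $u_-=u=u_+$ on all of $\cA$, not merely on $\cM$.
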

We may take
\begin{align*}
u_-(x,[s])&=\min_{(y,[t])\in
  M\times\mS^1}\{u(y,[t])+h((y,[t]),(x,[s]))\} \\
u_+(x,[s])&=\max_{(y,[t])\in
  M\times\mS^1}\{u(y,[t])-h((x,[s]),(y,[t]))\}.
\end{align*}
\begin{proposition}\label{Hu}
If $u:M\times\mS^1\to\R$ is a subsolution of \eqref{HJT} and
$(u_-,u_+)$ is a conjugated pair such that $u_+\le u\le u_-$, then
$I^*(u_-,u_+)=I^*(H,u)$. 
\end{proposition}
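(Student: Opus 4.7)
The plan is to establish the two inclusions $I^*(u_-,u_+)\subseteq I^*(H,u)$ and $I^*(H,u)\subseteq I^*(u_-,u_+)$ separately, by analysing along the relevant Hamiltonian orbit the quantities $u_--u$ and $u-u_+$ via calibration and the domination property of viscosity subsolutions.

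For the forward inclusion, take $(x,p,[s])\in I^*(u_-,u_+)$, so $p=Du_\pm(x,[s])$ and $(x,[s])\in I(u_-,u_+)$. Lemma~\ref{dif-I} produces $\ga:\R\to M$ with $\ga(s)=x$ that simultaneously calibrates $u_-$ and $u_+$; subtracting these two calibration identities on $[t,\tau]$ shows $t\mapsto(u_--u_+)(\gab(t))$ is constant, and it vanishes at $s$, hence on $\R$. Combined with $u_+\le u\le u_-$ this gives $u\equiv u_\pm$ along $\gab$, so $\ga$ also calibrates $u$. Since $p=L_v(x,\dga(s),[s])$ by Lemma~\ref{dif-I}, the Legendre transform identifies $\ga(\sigma)=\pi^*\circ\Pr\circ\phi^*_{\sigma-s}(x,p,[s])$, so $(x,p,[s])\in I^*(H,u)$.

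For the reverse inclusion, take $(x,p,[s])\in I^*(H,u)$ and let $\ga(\sigma)=\pi^*\circ\Pr\circ\phi^*_{\sigma-s}(x,p,[s])$ calibrate $u$ on $\R$. Set $\delta(t):=u_-(\gab(t))-u(\gab(t))\ge 0$. Since $u_-$ is a viscosity subsolution it is dominated by $L+c$, and since $\ga$ calibrates $u$, for $t\le\tau$ one obtains $\delta(\tau)\le\delta(t)$; i.e., $\delta$ is non-increasing. Invoking the classical fact (\cite{F}) that the $\alpha$-limit set of a backward calibrating curve of a subsolution lies in the projected Aubry set $\cA$, choose $\tau_n\to-\infty$ with $\gab(\tau_n)\to(y_0,[t_0])\in\cA$. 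Plugging the test point $(y,[t])=(y_0,[t_0])$ into the min-formula for $u_-$ in Lemma~\ref{u-u+} and using $h((y_0,[t_0]),(y_0,[t_0]))=0$ from the definition of $\cA$ gives $u_-(y_0,[t_0])\le u(y_0,[t_0])$; combined with $u\le u_-$ this yields $\delta(\tau_n)\to 0$, and monotonicity then forces $\delta\equiv 0$ on $\R$. A symmetric argument on $[s,\infty[$ using the $\omega$-limit in $\cA$ and the max-formula for $u_+$ gives $u_+=u$ along $\gab$. Therefore $(x,[s])\in I(u_-,u_+)$; since $\ga$ now calibrates $u_\pm$, Lemma~\ref{dif-I} gives $Du_\pm(x,[s])=L_v(x,\dga(s),[s])=p$, whence $(x,p,[s])\in I^*(u_-,u_+)$.

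The main obstacle is the reverse inclusion, and within it the step forcing $u_-=u$ (resp.\ $u_+=u$) along the whole orbit. The key ingredient is the classical $\alpha$-/$\omega$-limit property for calibrating curves of a subsolution; once that is available the rest is monotonicity of $\delta$ together with elementary manipulation of the min--max formulas of Lemma~\ref{u-u+} and the identity $h(\cdot,\cdot)=0$ characterizing $\cA$.
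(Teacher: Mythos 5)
Your two-inclusion strategy is exactly the paper's: the forward inclusion via the squeeze $u_+\le u\le u_-$ along the curve of Lemma \ref{dif-I}, and the reverse inclusion via the domination property of $u_\pm$ (monotonicity of $u_\pm-u$ along the calibrating curve) together with the fact that the $\alpha$- and $\omega$-limit points of a globally calibrating curve lie in $\cA$. Both directions are carried out correctly.

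There is, however, one step whose justification is narrower than the statement requires. To conclude $\delta(\tau_n)\to 0$ you derive $u_-(y_0,[t_0])\le u(y_0,[t_0])$ by plugging $(y_0,[t_0])$ into the explicit min-formula of Lemma \ref{u-u+}. But Proposition \ref{Hu} is stated for an \emph{arbitrary} conjugated pair with $u_+\le u\le u_-$, not only for the particular pair constructed by those formulas, so this justification does not cover the general case. The correct general argument, and the one the paper uses, is that the limit point lies in $\cA$, that $\cA\subset I(u_-,u_+)$ for \emph{every} conjugated pair (this is immediate from the definition of $\cA^*$ as the intersection of the sets $I^*(u_-,u_+)$), hence $u_-=u_+$ there, and the sandwich $u_+\le u\le u_-$ then forces $u_-=u=u_+$ at that point. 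With this substitution your proof is complete and coincides with the paper's.
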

\begin{proposition}\label{aubryconj}
There is a conjugated pair $(u_-,u_+)$ such that 
$\tcA=I(u_-,u_+)$ and $\cA^*=I^*(u_-,u_+)$.
\end{proposition}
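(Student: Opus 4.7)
The plan is to produce $(u_-,u_+)$ by starting from one especially well-behaved critical subsolution and letting Lemma \ref{u-u+} do the conjugation. I would invoke the Fathi--Siconolfi theorem (in its time-periodic form) to get a $C^1$ critical subsolution $u:M\times\mS^1\to\R$ which is \emph{strict outside $\cA$}, i.e.\ $u_t(x,[s])+H(x,Du(x,[s]),[s])<c$ for every $(x,[s])\notin\cA$; this is the input I expect to rely on from the literature, and the real content of the proof lies downstream of it. Applying Lemma \ref{u-u+} to this $u$ produces a conjugated pair $(u_-,u_+)$ with $u_+\le u\le u_-$; I claim this pair satisfies both equalities.

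The inclusions $\cA\subseteq I(u_-,u_+)$ and $\cA^*\subseteq I^*(u_-,u_+)$ come for free: the former from the explicit min/max formulas after Lemma \ref{u-u+} combined with the identity $h((x,[s]),(x,[s]))=0$ on $\cA$ (plus the sandwich $u_+\le u\le u_-$), and the latter straight from the definition of $\cA^*$ as an intersection of such sets.

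For the harder direction $I(u_-,u_+)\subseteq\cA$, take $(x,[s])\in I(u_-,u_+)$. Lemma \ref{dif-I} supplies $\ga:\R\to M$ with $\ga(s)=x$ calibrating both $u_\pm$. A short squeeze (use the subsolution inequality for $u$ along $\ga$ together with the calibration identities for $u_\pm$ and the anchor $u_-(\gab(s))=u(\gab(s))=u_+(\gab(s))$) shows that $\ga$ in fact calibrates $u$. Along any curve that calibrates a $C^1$ subsolution the Fenchel inequality is saturated, so
\[u_t(\gab(t))+H(\ga(t),Du(\gab(t)),[t])=c\quad\text{for every }t\in\R.\]
Strictness of $u$ outside $\cA$ forces $(\ga(t),[t])\in\cA$ for every $t$, in particular $(x,[s])\in\cA$.

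For the remaining $I^*(u_-,u_+)\subseteq\cA^*$, I would apply Proposition \ref{Hu} to replace $I^*(u_-,u_+)$ by $I^*(H,u)$. The defining calibrated curve for an element of $I^*(H,u)$ is exactly of the form handled in the previous paragraph, so the same argument pins its lift into $\tcA$ and the associated covector into $\cA^*$. The main obstacle of the plan is thus entirely the input ingredient: the existence of a $C^1$ (or $C^{1,1}$) critical subsolution strict outside $\cA$ in the space--time periodic setting. Once granted, the downstream argument is a clean marriage of Lemmas \ref{u-u+}--\ref{dif-I} and Propositions \ref{inq}--\ref{Hu}, hinging on the single mechanism that calibration forces PDE equality and that equality cannot hold strictly off $\cA$.
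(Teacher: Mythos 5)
Your argument is correct in substance, but it is not the route the paper takes: the paper's proof of this proposition is simply ``use Lemma \ref{u-u+} and follow Fathi's autonomous-case argument'', which in \cite{F} proceeds by extracting a countable family of conjugated pairs $(u_-^n,u_+^n)$ whose intersection already realizes $\cA^*$, feeding a convex combination of them (still a subsolution) into Lemma \ref{u-u+}, and checking that the coincidence set of the resulting pair sits inside every $I(u_-^n,u_+^n)$. You instead feed Lemma \ref{u-u+} a single $C^1$ critical subsolution that is strict off $\cA$ (Fathi--Siconolfi in its time-periodic form, cf.\ \cite{F1},\cite{Ma}) and use strictness to exclude calibrated curves off $\cA$. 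Your route gives a shorter and more transparent downstream argument --- the squeeze showing that a curve bi-calibrating $u_\pm$ must calibrate the sandwiched $u$, hence saturate the equation, hence lie over $\cA$, is clean --- at the cost of importing a heavier external theorem. Note that you must indeed quote the literature and not the paper's own Theorem \ref{subcrit}: that theorem's proof passes through Proposition \ref{local}, which already uses the conjugated pair of the present proposition, so invoking it here would be circular; you implicitly avoid this, and it is worth saying so explicitly.

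Two smaller points. First, your last step is glossed: from ``the orbit of $(x,p,[s])$ projects into $\cA$'' you still have to identify the covector. Since $u$ is $C^1$ and the orbit calibrates $u$, you get $p=Du(x,[s])$; combining this with $\cA^*\subseteq I^*(u_-,u_+)=I^*(H,u)$ and with the fact, stated in the paper, that $\cA^*$ projects onto $\cA$, the point of $\cA^*$ lying over $(x,[s])$ also carries the covector $Du(x,[s])$, whence $(x,p,[s])\in\cA^*$. Second, as printed the proposition asserts $\tcA=I(u_-,u_+)$, which can only be meant as $\cA=I(u_-,u_+)$ since the two sets live in different spaces; what your proof establishes is the latter.
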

It follows from Proposition \ref{Hu} that for a conjugated pair
$(u_-,u_+)$ given by Proposition \ref{aubryconj}  we have
$\cA^*=I^*(H,u_-)=I^*(H,u_+)$.  

In this article we assume the Aubry set $\cA^*$ is the union of the
hyperbolic periodic orbits $\Ga_i^*(t)$, $i\in[1,m]$
of the Hamiltonian flow with periods $N_i, i\in[1,m]$.

As shown in \cite{CIS} viscosity solutions are completely
determined by taking one value in each 
projected orbit $\gab_i(t)=(\ga_i(t),[t])$. Fix for instance 
$\xb_i=(x_i,[0])=\gab_i(0)$.
Because of the general properties recalled above, 
if $\phi$ is a viscosity solution of such that
$\phi(\xb_i) = \phi_i$ for all $i\in[1, m]$, we must have 
$\phi_j-\phi_i\le h(\xb_i,\xb_j)$ for all $i,j$. Conversely,
if this necessary condition is satisfied, then there is a unique
$\phi\in\cS^+$ having these prescribed values. In fact it is given by
\[\phi(z)=\max_i\phi_i-h(z,\xb_i).\]
Because the orbit $\Ga_i$ is hyperbolic, we have that $h_i(z)=h(z,\xb_i)$ 
is $C^2$  in a neighbourhood of the projected orbit.  A proof of 
this fact is similar to that of Proposition \ref{local} below. 
Define
\begin{align} \label{eq:average}
\lam_i:&=\frac 1{N_i}\int_0^{N_i} \Delta h_i(\gab_i(t))dt  \\
  \label{eq:minla}
\lab:&=\min_{1\in[1,m]} \lam_i.
\end{align}
Our main result is 
\begin{main}
Let $M=\T^d$ and assume $H$ satisfies \eqref{growth}.
Suppose $\phi_{\ep_n}$ converges to $\phi_0$ for a sequence
$\ep_n\to 0$, then
\[\phi_0(z)=\max\{\phi_0(\xb_i)-h_i(z):\lam_i=\lab\}.\]
In particular, if there is just one orbit $\ga_I$ such that $\lam_I=\lab$, 
then the solutions $\phi_{\ep}$ of\eqref{HJB}, normalized by 
$\phi_{\ep}(x_I,0)=0$, converge uniformly to $-h_I$ as $\ep\to 0$.
\end{main}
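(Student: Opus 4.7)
The proof splits into two parts: first identifying the general form of any subsequential limit $\phi_0$, then using the hyperbolicity of the orbits in $\cA^*$ to restrict the ``active'' indices in that form to those $i$ with $\lam_i=\lab$.

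\textbf{Step 1 (Representation of $\phi_0$).} By the stability theorem, $\phi_0$ is a viscosity solution of \eqref{HJT}, and so by Lemma \ref{u-u+} it lies between some conjugated pair. Since $\phi_\ep$ is a classical solution of a parabolic equation, limits of $\phi_\ep$ inherit a half-relaxed side that, together with Proposition \ref{aubryconj}, forces $\phi_0\in\cS^+$. Applying the forward Lax-Oleinik formula and using that a function in $\cS^+$ is determined on $\cA$ by its values $\phi_i:=\phi_0(\xb_i)$, one obtains
\[\phi_0(z)=\max_{i\in[1,m]}\{\phi_i-h_i(z)\}.\]
The theorem therefore reduces to the selection inequality: for every $i$ with $\lam_i>\lab$ and every $z$,
\[\phi_i-h_i(z)\le \max_{j:\lam_j=\lab}\{\phi_j-h_j(z)\}. \tag{$\star$}\]

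\textbf{Step 2 (Local correctors).} Because each orbit $\Ga_i^*$ is hyperbolic, $h_i$ is $C^2$ on a neighbourhood $U_i$ of $\gab_i(\mS^1)$ and satisfies $h_{i,t}+H(x,Dh_i,t)=c(L)$ there. Along the orbit, the linearised transport operator has principal symbol $\partial_t+H_p(\gab_i(t),Dh_i(\gab_i(t)))\cdot D_x$, whose characteristics are exactly $\gab_i$. The equation
\[\partial_t\chi_i+H_p(\gab_i(t),Dh_i(\gab_i(t)))\cdot D\chi_i=\Delta h_i-\lam_i\]
has a periodic solution along $\gab_i$ precisely because the RHS has zero mean by the definition of $\lam_i$ in \eqref{eq:average}. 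Extend $\chi_i$ smoothly to $U_i$. A direct computation shows that $h_i+\ep\chi_i$ is a classical solution of the HJB equation with constant $c(L)+\ep\lam_i$, up to errors of size $O(\ep^2)$ on $U_i$ and $o(1)$ away from the orbit.

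\textbf{Step 3 (Asymptotics of $c(\ep)$).} Let $I$ realise $\lam_I=\lab$. Globalising $h_I+\ep\chi_I$ through an inf-convolution with a global subsolution of \eqref{HJT} yields a near-subsolution of \eqref{HJB} with constant $c(L)+\ep\lab+o(\ep)$; comparison gives $c(\ep)\le c(L)+\ep\lab+o(\ep)$. The matching lower bound comes from testing the equation against a Mather measure on $\Ga_I$. Hence $c(\ep)=c(L)+\ep\lab+o(\ep)$, and in particular for each $i$ the ``defect'' of $h_i+\ep\chi_i$ as a solution of \eqref{HJB} is $\ep(\lam_i-\lab)+o(\ep)$, strictly positive when $\lam_i>\lab$.

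\textbf{Step 4 (Selection).} Fix $i$ with $\lam_i>\lab$ and consider $\om_\ep:=\phi_\ep-h_i-\ep\chi_i$ on $U_i$. The PDE for $\om_\ep$ carries a drift $-H_p(x,Dh_i,t)\cdot D$, the viscosity $\ep\Delta$, and a \emph{strictly positive} source $\ep(\lam_i-\lab)+o(\ep)$ along $\gab_i$. A parabolic maximum principle on the annular region between a small and a large neighbourhood of $\gab_i$, combined with the equi-Lipschitz bound of Lemma \ref{lip} (where the growth condition \eqref{growth} is used to produce a global Bernstein-type gradient bound), prevents $\om_\ep$ from attaining its maximum on the orbit as $\ep\to 0$. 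Pushing $\ep\to 0$ one deduces $(\star)$: the boundary values of $\om_\ep$ on $\partial U_i$ are controlled by the contribution of the other orbits via the forward formula of Step 1, and those with $\lam_j=\lab$ dominate.

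\textbf{Main obstacle.} The delicate point is globalising the local ``positive defect'' along $\Ga_i$ (with $\lam_i>\lab$) into inequality $(\star)$: one must combine the PDE information on $\om_\ep$ near $\gab_i$ with the representation of $\phi_\ep$ as an infimum over controlled paths (or with iterations of the Lax--Oleinik semigroup) so that the local suppression propagates to a global comparison with the forward envelope built from the dominant orbits. The growth condition \eqref{growth} and the completeness of the flow are what make the required global a priori estimates on $\phi_\ep$ and its gradient available.
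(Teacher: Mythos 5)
Your overall architecture (a forward representation formula for $\phi_0$, a first-order expansion of $c(\ep)$ governed by $\lab$, and a selection step excluding orbits with $\lam_i>\lab$) matches the paper's, but the two steps that carry all the difficulty are asserted rather than proved. First, Step 3. In the paper's conventions the sharp bound is $\liminf_{\ep\to0}(c(\ep)-c(0))/\ep\ge-\lab$ (Lemma \ref{l.estimacion.p}; note your signs are reversed throughout: $-h_i$, not $h_i$, is the forward solution of \eqref{HJT}, and $c(\ep)-c(0)\sim-\ep\lab$). ``Testing against a Mather measure on $\Ga_I$'' cannot produce this: it yields at best $c(\ep)\ge c(0)+\ep\int\Delta\phi_\ep\,d\mu_I$ plus a nonnegative convexity term, and nothing forces $\Delta\phi_\ep$ to converge to $-\Delta h_I$ along the orbit --- only the one-sided semiconvexity bound of Lemma \ref{semi} is available, whose constant has no relation to $\lab$. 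The paper obtains the sharp constant from the stochastic representation \eqref{eq:lax0}, Dynkin's formula applied to $\Phi=-h_I$, and, crucially, the Freidlin--Wentzell estimate $\liminf\ep\log\E(\tau)>0$ for the exit time from a $\de$-neighbourhood of the attracting orbit; it is this exponential lower bound on $\E(\tau)$ that kills the boundary term $2M/(\ep\E(\tau))$ and turns the time average of $\Delta h_I$ along the diffusion into $\lam_I$. Your proposal offers no substitute for this input.

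Second, Steps 2 and 4. Your corrector $\chi_i$ solves the transport equation only on $\gab_i$ itself (that is the only place the zero-mean condition applies); after an arbitrary smooth extension, $h_i+\ep\chi_i$ solves the viscous equation with error $O(\ep\,d(\cdot,\gab_i))+O(\ep^2)$ on $U_i$, not $O(\ep^2)$. Hence the ``strictly positive source $\ep(\lam_i-\lab)+o(\ep)$'' exists only on the orbit, and the annulus maximum principle for $\om_\ep$ has uncontrolled boundary data on $\partial U_i$ --- exactly the point you yourself concede as the ``main obstacle''. The paper's selection mechanism (Lemma \ref{l.igualdad.p}) is different and does close: if $\gab_i$ is a local maximum of $\phi_0-f$, with $f$ the strict $C^k$ subsolution of Theorem \ref{subcrit} (which, via Lemma \ref{root} and Corollary \ref{min-rep}, yields the representation of $\phi_0$ over exactly those orbits), then the optimal diffusion with drift $H_p(x,D\phi_{\ep_n},t)$ started on $\gab_i$ stays near it for a time of order $e^{m/\ep_n}$ --- here Lemma \ref{convergeC1} is needed to see that this $\ep$-dependent drift converges to one for which $\gab_i$ is attracting --- forcing $\limsup(c(\ep_n)-c(0))/\ep_n\le-\lam_i$ and hence $\lam_i=\lab$. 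You would also need the reduction to a regular Lagrangian of section \ref{sec:regular}, since the proof of Lemma \ref{root} uses uniform convergence of $F_{a,b}+c(b-a)$ to the Peierls barrier.
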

\begin{example}
 For $k\in\N$ let $V:\R\to\R$ be a $1/k$-periodic function with 
non-degenerate maxima $0\le x_1,\ldots,x_N<\dfrac 1k$,
$\max V=0$. Consider the mechanical Hamiltonian and Lagrangian
\[H_a(x,p)=\frac{p^2}2+V(x),\quad L_a(x,v)=\frac{v^2}2+V(x)\]
whose  projected Aubry set consists of the hyperbolic fixed points 
\[y_{ij}=x_i-\frac jk,i=1,\ldots,N,j=0,\ldots,k-1.\]
Let $h_a$ be the Peierls barrier for $H_a$, then 
$\phi(x)=-h_a(x,x_i)$
is a viscosity solution of 
\begin{equation}\label{mec}
\frac{\phi'^2}2+V(x)=0
\end{equation}
that is a $C^k$ in a neighbourhood of $x_i$. Differentiating \eqref{mec} twice
\[\phi''(x_i)^2+V''(x_i)=0.\]
Consider now the time periodic Hamiltonian 
\[H(x,p,t)=\frac{p^2}2-\frac pk+V(x+\frac tk)\]
with corresponding Lagrangian
\[L(x,v,t)=L_a(x+\frac tk,v+\frac 1k)\]
The projected Aubry set consists of the hyperbolic $k$-periodic orbits
$\gab_i(t)=(x_i-\dfrac tk,t), i=1,\ldots,N$. Function
$u(x,t)=\phi(x+\frac tk)$ is a viscosity solution of 
\[u_t+\frac{u_x^2}2-\frac{u_x}k+V(x+\frac tk)=0\]
which has a maximum at $\gab_i(t)$. From Lemma
\ref{root} below there is a neibourhood of $\gab_i$
where $u(x,t)=-h(x,t,\xb_i)$ and we have
\[u_{xx}(\ga_i(t))= \phi''(x_i)=-\sqrt{-V''(x_i)}.\]
 \end{example}

\section{Critical subsolutions}
An important tool for the proof of THEOREM is the existence of strict 
$C^k$ critical subsolutions in our setting, that extends the result  
of Bernard \cite{B} for the autonomous case
\label{sec:critical}
\begin{theorem}\label{subcrit}
Let $H:T^*M\times\R\to\R$ be $C^k$ satifying the standard hypotheses.
Assume the Aubry set $\cA^*$ is the union of a finite number of hyperbolic 
periodic orbits $\Ga_i^*$ of the the Hamiltonian flow, then there is
a $C^k$ subsolution $u$ of \eqref{HJT} such that 
\[u_t+H(x,Du(x,[t]),t)< c\]
for any $(x,[t])\notin\cA$.
\end{theorem}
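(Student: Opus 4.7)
The plan is to combine a Fathi--Siconolfi/Bernard-type construction of a strict critical subsolution with local $C^k$ data coming from hyperbolicity, and then smooth globally. First, use the Fathi--Siconolfi construction, adapted to the time-periodic setting, to obtain a Lipschitz subsolution $v$ of \eqref{HJT} that is \emph{strict} off $\cA$: for every compact $K\subset(M\times\mS^1)\setminus\cA$ there is $\de_K>0$ with $v_t+H(x,Dv,t)\le c-\de_K$ a.e.\ on $K$. This furnishes the global scaffolding.

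Second, use the hyperbolicity of each $\Ga^*_i$ to produce a $C^k$ local solution $u_i$ of \eqref{HJT} on an open neighbourhood $V_i$ of $\gab_i$. The local centre-stable (or centre-unstable) manifold of $\Ga_i^*$ is a $C^k$ Lagrangian graph over a neighbourhood of $\gab_i$ in $M\times\mS^1$, and its generating function, which up to an additive constant is $-h_i$, is a $C^k$ solution of \eqref{HJT} on $V_i$. By Proposition~\ref{inq} every subsolution is calibrated along $\ga_i$, so $v=u_i$ and $Dv=Du_i=L_v$ on $\gab_i$; in particular $v-u_i$ vanishes to second order on $\gab_i$.

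Third, glue with a partition of unity. Shrink the $V_i$ so they are pairwise disjoint, choose smooth cutoffs $\psi_i$ equal to $1$ on a smaller neighbourhood $V_i^0$ of $\gab_i$ and supported in $V_i$, and set
\[
u=\sum_i\psi_i u_i+\Bigl(1-\sum_i\psi_i\Bigr)v.
\]
On $V_i^0$, $u=u_i$ is an exact solution; outside $\bigcup_i V_i$, $u=v$ is a strict subsolution. On the transition band $V_i\setminus V_i^0$, convexity of $H$ in $p$ together with $Du=\psi_iDu_i+(1-\psi_i)Dv+(u_i-v)D\psi_i$ yields
\[
H(x,Du,t)\le\psi_iH(x,Du_i,t)+(1-\psi_i)H(x,Dv,t)+R_i,
\]
with $R_i$ controlled by $\|u_i-v\|_{C^0(V_i)}\|D\psi_i\|_\infty$. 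Since $v-u_i$ vanishes quadratically on $\gab_i$ and the strictness gap of $v$ is bounded below on compact subsets of $V_i\setminus\{\gab_i\}$, thinning the transition band makes $R_i$ smaller than the gap. A final mollification at a scale much smaller than the strictness gap, supported away from $\bigcup_i V_i^0$ and patched smoothly with $u_i$ on $V_i^0$, produces a globally $C^k$ subsolution strict off $\cA$.

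The main obstacle is the gluing step: matching the quadratic vanishing of $v-u_i$ on $\gab_i$ against the order at which the strictness gap of $v$ opens up as one moves away from $\gab_i$. This requires the hyperbolicity of $\Ga_i^*$ to ensure that any non-solution subsolution near $\gab_i$ has a strictness gap growing at least like $\mathrm{dist}(\cdot,\gab_i)^2$, on the same scale as $v-u_i$, so that the remainder $R_i$ can be absorbed. Bernard's autonomous treatment in \cite{B} provides the blueprint; the time-periodic extension is essentially a matter of carrying the $[t]$ variable through the Lagrangian and sup/inf-convolution estimates.
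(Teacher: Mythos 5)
There is a genuine gap at the gluing step, and it is precisely the step you flag as ``the main obstacle'' without actually resolving it. Write $d$ for the distance to $\gab_i$ and let the transition band be $\{d_0\le d\le d_0+w\}$. Your error term is $R_i\sim \|u_i-v\|_{C^0(\text{band})}\,\|D\psi_i\|_\infty\sim d_0^2/w$ (normalizing $u_i=v$ on $\gab_i$ and granting the quadratic vanishing of $u_i-v$). On the other hand the strictness gap $c-v_t-H(x,Dv,t)$ is a nonnegative function vanishing on $\gab_i$; if $v$ is regular enough for your Taylor estimates it therefore vanishes to \emph{second} order, so the gap on the band is at most $O(d_0^2)$ -- hyperbolicity cannot make it open faster. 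Hence $R_i/\text{gap}\gtrsim 1/w$: \emph{thinning} the band makes the comparison worse, not better, and even taking the band as wide as possible, $w\sim d_0$, leaves $R_i\sim d_0\gg d_0^2\sim\text{gap}$. The two quantities you are trying to play off against each other live at exactly the same order, with no free small parameter, so the partition-of-unity interpolation between the fixed functions $u_i$ and $v$ cannot be made into a subsolution near $\gab_i$. (A secondary, smaller gap: identifying the generating function of $W_i^+$ with a weak KAM solution requires showing that all calibrating curves ending near $\gab_i$ lie on $W_i^+$; this is the content of Lemma \ref{vecindad} in the paper and is asserted rather than proved in your sketch.)

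The paper avoids this dead end entirely, and this is the idea your proposal is missing. By Massart's lemma there is $W\ge 0$, $C^2$, vanishing exactly on $\cA$, with $c(L-W)=c(L)$ and $\cA(L-W)=\cA(L)$; one picks $0\le V\le W$, positive off $\cA$ and so flat on $\cA$ that the orbits $\Ga_i^*$ remain hyperbolic for $H+V$. The weak KAM solution $u$ of $u_t+H(x,Du,t)+V=c$ is then a \emph{single globally defined} function which is automatically a subsolution of \eqref{HJT}, strict off $\cA$ by the amount $V>0$, and which is $C^k$ in a whole neighbourhood of $\cA$ by Proposition \ref{local} applied to $H+V$. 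The only regularization needed is a mollification supported at a definite distance from $\cA$, where the gap is bounded below by a positive constant and the mollification error tends to zero -- so there is a genuine small parameter there, unlike in your band. If you want to salvage a gluing-type argument you would have to interpolate not between $u_i$ and $v$ but between objects whose difference can be made small relative to the gap; the modified-Lagrangian route accomplishes exactly that by making the local and global pieces the \emph{same} function near $\cA$.
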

Denote by $W_i^-, W_i^+$ the weak stable and unstable manifolds of $\Ga_i^*$
and $E^\pm(t)=d\Pr_{\Ga_i^*(t)}(T_{\Ga_i^*(t)}W^\pm)$.

For $\te\in T^*M$,
define the vertical subspace of $T_\te T^*M$ as
$V(\te)=\ker(d\pi^*(\te))$. If $H(\te)$ is the horizontal
subspace given by a riemannian  conexion in $M$, 
$T_\te T^*M=H(\te)\oplus V(\te)$. Let 
$h_\te:T_\te T^*M\to H(\te)$ be the corresponding projection.
Writting $\psi_s^t(\te)=\Pr\circ\phi^*_{t-s}(\te,[s])$ we say that
the points $(\te,[s])$, $\phi^*_{t-s}(\te,[s]), [t]\ne [s]$ 
are conjugated if $d{\psi_s^t}_\te V(\te)\cap V(\psi_s^t(\te))\ne\{0\}$. 
Being a minimizer, $\Ga_i^*$ has no pair of conjugated points and
writing $\te_i=\Pr\circ\Ga^*_i$, we have from \cite{CI} that   
$\forall R>0$ $\exists T(R,\te_i(s))>0$ such that 
$\|h_{\te_i(t)}d{\psi_s^t}_{\te_i(s)}w\|>R\|w\|$ for all $|t|>T(R,\te_i(s))$ and 
$w\in V(\te_i(s))\entre\{0\}$. 
Thus $V(\theta_i(s))\cap E_i^\pm(s)=\{0\}$ 
and so $V(\theta_i(s))\times\{0\}\cap T_{\Ga_i(s)}W_i^\pm=\{0\}$.
Then $W_i^\pm$ is locally the graph of a  $C^{k-1}$ transformation. 
\begin{proposition}\label{local}
The functions $u_\pm\in S^\pm$ given by Proposition \ref{aubryconj} are 
$C^k$ in a neighborhood of $\cA$.
\end{proposition}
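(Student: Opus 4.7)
The plan is to show that, for each projected periodic orbit $\gab_i$, on a small neighborhood $U_i\subset M\times\mS^1$ the functions $u_-$ and $u_+$ coincide up to additive constants with $C^k$ primitives of the sections $p_i^-,p_i^+\colon U_i\to T^*M$ whose graphs are the local unstable and stable manifolds $W_i^-,W_i^+$ of $\Gamma_i^*$. The graph structure of $W_i^\pm$ with $p_i^\pm\in C^{k-1}$ has been established just before the statement, and flow-invariance of $W_i^\pm$ makes the slicewise Lagrangian form $p_i^\pm\,dx$ closed, while the time-dependence is pinned down by $\partial_t v_i^\pm+H(x,p_i^\pm,t)=c$; this yields a $C^k$ classical solution $v_i^\pm\colon U_i\to\R$ of \eqref{HJT}. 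Everything then reduces to checking that $u_\pm-v_i^\mp$ is locally constant on $U_i$.

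For $u_-$, fix a differentiability point $(x,[s])\in U_i$ and, using $u_-\in\cS^-$, a backward calibrating curve $\alpha\colon(-\infty,s]\to M$ with $\alpha(s)=x$. Its Hamiltonian lift $\Gamma_\alpha^*$ is a full semi-static orbit satisfying $(x,Du_-(x,[s]),[s])=\Gamma_\alpha^*(s)$, whose $\af$-limit set lies in $\cA^*$ by Ma\~n\'e's theory. Lemma \ref{dif-I} already gives $\Gamma_\alpha^*(s)=\Gamma_i^*(s)\in W_i^-$ whenever $(x,[s])\in\gab_i$; combining this anchoring with the equi-Lipschitz bound on calibrating curves, one shrinks $U_i$ so that any such $\Gamma_\alpha^*$ stays in a prescribed small tube around $\Gamma_i^*$ for all $t\le s$. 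Uniform hyperbolicity of $\Gamma_i^*$ then forces $\Gamma_\alpha^*\subset W_i^-$, whence $Du_-(x,[s])=p_i^-(x,[s])=Dv_i^-(x,[s])$. Since $u_-$ is Lipschitz and differentiable almost everywhere, $u_--v_i^-$ has vanishing derivative a.e.\ on $U_i$, so it is locally constant, and $u_-\in C^k(U_i)$. The argument for $u_+$ is symmetric, using $\cS^+$, forward calibrating curves and $W_i^+$.

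The crux — and the step I expect to be most delicate — is this localization. One must rule out (a) a backward calibrating curve ending in $U_i$ whose $\af$-limit is a different component $\Gamma_j^*$, $j\neq i$, and (b) a curve with $\af$-limit $\Gamma_i^*$ that performs a wide excursion before collapsing onto the orbit. Point (a) is handled, for $U_i$ sufficiently small, by the finiteness and pairwise separation of the Aubry orbits $\Gamma_j^*$ together with the graph structure of the $W_j^-$; point (b) is handled by the local unstable manifold theorem applied to orbits of $\phi^*_t$ trapped in a sufficiently small tube around $\Gamma_i^*$. These two inputs, together with the finiteness of $\cA^*$, yield the required local alignment of calibrating orbits with $W_i^-$, from which the $C^k$ regularity of $u_\pm$ follows.
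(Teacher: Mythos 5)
Your overall architecture matches the paper's: identify $Du_\pm$ near each $\gab_i$ with the $C^{k-1}$ section whose graph is the appropriate invariant manifold of $\Gamma_i^*$, then integrate to get a local $C^k$ classical solution. The gap is in the step you yourself flag as the crux, and your proposed resolutions of (a) and (b) do not close it. For (b), invoking ``the local unstable manifold theorem applied to orbits trapped in a sufficiently small tube'' is circular: that theorem applies only once you know the backward orbit $\Gamma_\alpha^*$ never leaves the tube, and the trapping is exactly what must be proved --- a calibrating curve ending near $\gab_i$ could a priori return to a neighborhood of $\Gamma_i^*$ along the stable manifold after a large excursion, in which case its endpoint momentum would not be $p_i^-$. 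For (a), finiteness and separation of the Aubry orbits plus the graph structure of the invariant manifolds do not prevent the unstable manifold of $\Gamma_j^*$, $j\ne i$, from entering any fixed neighborhood of $\gab_i$ in the base (heteroclinic behavior); nothing purely topological rules this out. The semicontinuity you allude to (``anchoring'' plus equi-Lipschitz bounds on calibrating curves) only gives $C^0$-closeness of $\alpha$ to $\ga_i$ on compact time windows $[s-T,s]$, never on all of $(-\infty,s]$.

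What is actually needed --- and what the paper's Lemma \ref{vecindad} supplies --- is a compactness-and-contradiction argument: take calibrating curves $\zeta_n$ with endpoints tending to $\gab_i$ whose lifts exit a fixed compact neighborhood $U^*$ of $\Gamma_i^*$ with $U^*\cap\cA^*=\Gamma_i^*$ at times $T_n$; translate by $\lb T_n\rb$ and extract a limit orbit $\Upsilon$ touching $\partial U^*$. If $\Upsilon$ is defined on all of $\R$ it is a bi-infinite calibrating orbit, hence lies in $I^*(H,u_-)=\cA^*$ by Propositions \ref{Hu} and \ref{aubryconj}, contradicting $\Upsilon(\tau)\in\partial U^*$; if it is defined on a half-line ending on $\gab_i$, uniqueness of the calibrating curve at points of $\gab_i$ where $u_-$ is differentiable forces $\Upsilon$ to coincide with $\Gamma_i^*$, again a contradiction. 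These two variational inputs (bi-infinite calibrating orbits lie in the Aubry set; calibrating curves through the Aubry set are unique) are absent from your argument, and without them the localization, hence the identification $Du_\pm=p_i^\mp$ on a full neighborhood, is not established. The remaining steps of your proof (Lagrangian graph, primitive, locally constant difference) are fine and essentially the paper's.
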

\begin{proof}
Recall that $u_\pm$ is diferentiable at $(x,[s])$ iff there is only
one curve $\zeta:(-\infty,s]\to M$ calibrating $u_\pm$ with $\zeta(s)=x$
and moreover $Du_\pm(x,[s]) =L_v(\dot\zeta(s),s),[s])$.
We have the following Lemma for $u_-$, and its analogue for $u_+$
\begin{lemma}\label{vecindad}
Let $U^*$ be a compact neighborhood of $\Ga_i^*$
such that $U^*\cap\cA^*=\Ga_i^*$. There exists a neighborhood $U$
of $\gab_i$ such that for any curve $\zeta\in C^2((-\infty,s], M), s\in[0,1]$ 
calibrating $u_-$ with $\bar\zeta(s)\in U$, one has
$Z^*(t)=(\zeta(t),L_v(Z(t)),[t])\in U^*$ for all $t\le s$.
This implies that $Z^*(s)\in W_i^+$ and $u$ is differentiable at $\bar\zeta(s)$. 
\end{lemma}
\begin{proof}[Proof of Lemma \ref{vecindad}]
If the conclusion is not true, there is a sequence
of calibrating curves $\zeta_n\in C^2((-\infty,s_n], M) , s_n\in[0,1]$ with
$\lim\limits_{n\to\infty}d(\bar\zeta_n(s_n),\gab_i)=0$, and a sequence
of times $T_n\le s_n$ with  $Z_n^*(T_n)\in\partial U^*$.
Define $\Upsilon_n:(-\infty,s_n-\lb T_n\rb)\to T^*M\times{\mS}^1$ by
$\Upsilon_n(t)=Z^*_n(t+\lb T_n\rb)$. We can assume by taking a subsequence that 
$Z_n^*(T_n)$ converges to $(y,w,[\tau])$ and $\Upsilon_n$ converges
uniformly on compact sets to a limit  trajectory $\Upsilon:I \to T^*M\times{\mS}^1$, 
where the interval $I$ is either of the form $(-\infty,T]$  or $\R$.
We have $\Upsilon_n(\tau)\in\partial U^*$.
Since the curves $\pi^*\circ\Pr\circ \Upsilon_n$ calibrate $u_-$, 
so does $\upsilon=\pi^*\circ\Pr\circ \Upsilon$. 
If $I=(-\infty,T]$, then $\bar\upsilon(T)\in\gab_i$ since
$\lim\limits_{n\to\infty}d(\bar\zeta_n(s_n),\gab_i)=0$. 
Since $u_-$ is diferentiable on $\gab_i$, that is the only
calibrating curve at $\gab_i(T)$, so
$\Upsilon (T)=(\upsilon(T), Du_-(\bar\upsilon(T),[T])=\Ga_i^*(T)$
so $\Upsilon (T)\in\cA^*$, a contradiction.
If $I=\R$, then $\Upsilon(\tau)\in I^*(H,u)=\cA^*$, a contradiction again.
\end{proof}
From Lemma \ref{vecindad} there is a neighborhood $U$ of $\gab_i$ such that 
$u_-$ is differentiable in $U$ and graph$(Du|U)=(\pi^*\times Id)^{-1}(U)\cap W_i^+$.
\end{proof}
To get Theorem \ref{subcrit} the main tool is the following Lemma proved by
D. Masart 
\begin{lemma}\cite{Ma}
  There is a $C^2$ non-negative function $W:M\times\mS^1\to\R$,
  positive outside $\cA(L)$ and zero inside  $\cA(L)$ such that $c(L-W)=c(L)$
and  $\cA(L-W)=\cA(L)$.
\end{lemma}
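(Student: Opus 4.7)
The plan is to produce $W$ as a smooth non-negative majorant of the defect of a well-chosen $C^{1,1}$ critical subsolution. First, I would invoke a space-time periodic version of Bernard's theorem to obtain a $C^{1,1}$ critical subsolution $u:M\times\mS^1\to\R$ of \eqref{HJT} whose defect $f(x,t) := c - u_t(x,t) - H(x, Du(x,t), t)$ is non-negative and vanishes exactly on $\cA$. Bernard's construction in the autonomous case starts from a Lipschitz subsolution, smooths it to $C^{1,1}$ via Fathi--Siconolfi, and averages over a countable dense family of critical subsolutions so that the intersection of the zero sets of their defects collapses to $\cA$; the space-time periodic adaptation is routine.

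Second, I would build $W\in C^2$ with $0\leq W<f$ on $U:=(M\times\mS^1)\setminus\cA$ and $W=0$ on $\cA$, by a partition-of-unity argument. Cover $U$ by a locally finite family of open balls $B_n$ with $\overline{B_n}\subset U$ and every point of $U$ in the interior of some $B_n$; set $f_n:=\min_{\overline{B_n}}f>0$; pick $C^\infty$ bumps $\tilde\psi_n\ge 0$, supported in $\overline{B_n}$, positive on $B_n$ and bounded by $1$; and define
\[W := \tfrac{1}{2}\sum_n 2^{-n} f_n \,\tilde\psi_n.\]
Local finiteness makes this a $C^\infty$ function vanishing precisely on $\cA$; at every $(x,t)$ one has $W(x,t)\le \tfrac{f(x,t)}{2}\sum_{n:\,(x,t)\in\overline{B_n}}2^{-n}\le f(x,t)/2$.

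Third, the two conclusions of the lemma follow by soft arguments. For $c(L-W)=c(L)$: the inequality $u_t+H(x,Du,t)+W\leq (c-f)+W\leq c$ shows that $u$ is a $C^{1,1}$ subsolution of the perturbed equation with the same constant $c$, giving $c(L-W)\leq c$; conversely, any $L$-Mather measure $\mu_0$ is supported on $\tcM\subset\tcA$, whose projection lies in $\{W=0\}$, so $\int(L-W)\,d\mu_0=\int L\,d\mu_0=-c(L)$, forcing $c(L-W)\ge c$. For the Aubry sets: $\cA(L)\subset\cA(L-W)$ is immediate from $W\ge 0$ and $c(L-W)=c$, which yield $0\leq A_{(L-W)+c}(\gamma)=A_{L+c}(\gamma)-\int W\leq A_{L+c}(\gamma)$ on closed curves; while $\cA(L-W)\subset\cA$ uses the standard weak-KAM fact that the Aubry set is contained in the zero set of the defect of any critical subsolution---here applied to $u$, viewed as a subsolution of $L-W$, whose defect $f-W$ has been arranged to vanish exactly on $\cA$.

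The main obstacle is the first step. Producing a $C^{1,1}$ critical subsolution whose defect vanishes exactly on $\cA$ is the deep input; once this is in hand, everything else is soft. In the time-periodic setting one must verify that Fathi--Siconolfi regularization and Bernard's averaging extend without change, which is standard but should be stated carefully.
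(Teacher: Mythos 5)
The paper offers no proof of this lemma at all: it is quoted directly from Massart \cite{Ma}, and your reconstruction is in substance Massart's own strategy (a strict $C^{1,1}$ critical subsolution plus a smooth nonnegative minorant of its defect), so the route is the right one. One caveat on step 1: the existence of a single $C^{1,1}$ subsolution $u$ whose defect $f=c-u_t-H(x,Du,t)$ vanishes exactly on $\cA$ is not a preliminary to the lemma but essentially the entire content of \cite{Ma}; describing the time-periodic adaptation of Fathi--Siconolfi regularization and Bernard's averaging as ``routine'' understates where the work lies, although you do flag it honestly as the deep input.

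There is one genuine, though easily repaired, gap in step 2. Your cover $\{B_n\}$ is locally finite only in $U=(M\times\mS^1)\setminus\cA$, so the sum $\sum_n 2^{-n}f_n\tilde\psi_n$ is $C^\infty$ on $U$ and continuous on all of $M\times\mS^1$ (being squeezed between $0$ and $f/2$), but nothing in your argument controls its derivatives at points of $\cA$: infinitely many shrinking balls accumulate there and $\|\tilde\psi_n\|_{C^2}$ will in general blow up, so ``local finiteness makes this a $C^\infty$ function'' is false across $\cA$. The standard repair is to weight by the norms of the bumps, e.g.\ $W=\sum_n a_n\tilde\psi_n$ with $a_n=2^{-n}f_n/(1+\|\tilde\psi_n\|_{C^2})$, so the series converges in $C^2(M\times\mS^1)$; this only decreases $W$, hence the bound $W\le f/2$ and the strict positivity on $U$ survive. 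With that correction, steps 3 and 4 are sound: $c(L-W)\le c(L)$ because $u$ is a subsolution of the perturbed equation with the same constant, $c(L-W)\ge c(L)$ by testing against an $L$-Mather measure supported over $\cA$ where $W=0$, the inclusion $\cA(L)\subset\cA(L-W)$ follows from $0\le h_{L-W}(z,z)\le h_L(z,z)$ since $W\ge0$ and the critical values agree, and the reverse inclusion follows because $\cA(L-W)$ must lie in the zero set of the defect $f-W$ of $u$, which is exactly $\cA(L)$ by construction.
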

It follows that for any function $V:M\times\mS^1\to\R$, 
positive outside $\cA(L)$ and such that $0\le V\le W$ we have 
$c(L-V)=c(L)$ and  $\cA(L-W)=\cA(L)$.
Function $V$ can be chosen so flat on $\cA(L)$ that
the linearized Hamiltonian flow along the orbits $\Ga_i^*$ is the
same for $H$ and $H+V$ . As a consequence, the orbits $\Ga_i^*$  remain
hyperbolic as orbits of $H + V$. Applying Proposition \ref{local} to
$H+V$ we obtain a solution of the Hamilton-Jacobi equation
\[u_t+H(x,Du,t)+V(x,t)=c(L-V)=c(L).\]
which is $C^k$ in a neighbourhood of $\cA(L-W)=\cA(L)$. This function
is a subsolution of \eqref{HJT} which is strict outside $\cA(L)$
and can be regularized to a $C^k$ subsolution of \eqref{HJT}.
Smoothing in this kind of context has been done in \cite{F}, Theorem 9.2. 
\section{Uniform Derivate bounds and Semiconvexity}
\label{sec:semicon}
\begin{lemma}\label{lip}
The periodic solutions $\phi_\ep$ of \eqref{HJB} have derivative uniformly
bounded.
 \end{lemma}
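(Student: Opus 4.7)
My plan is to apply Bernstein's classical gradient estimate: introduce an auxiliary function involving $|D\phi_\ep|^2$, derive the PDE it satisfies, and invoke the maximum principle on the compact domain $\T^d\times\mS^1$; the growth condition \eqref{growth} will supply the crucial algebraic closure. As a preliminary step, I would show that $c(\ep)$ is uniformly bounded. By standard parabolic regularity, $\phi_\ep$ is smooth, hence it attains a maximum and a minimum on $\T^d\times\mS^1$. At a maximum $D\phi_\ep=0$, $\phi_{\ep,t}=0$, $\Delta\phi_\ep\le 0$, so \eqref{HJB} gives $c(\ep)\le H(x_*,0,t_*)\le\sup H(\cdot,0,\cdot)$; the analogous estimate at the minimum produces $c(\ep)\ge\inf H(\cdot,0,\cdot)$.

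For the Bernstein argument, I would set $w=|D\phi_\ep|^2/2+A\phi_\ep$ for a constant $A>0$ to be chosen. Differentiating \eqref{HJB} in $x_k$, multiplying by $\phi_{\ep,k}$, summing over $k$, and adding $A$ times \eqref{HJB} produces the identity
\[ w_t+\ep\Delta w+H_p\cdot Dw \;=\; \ep|D^2\phi_\ep|^2-H_x\cdot D\phi_\ep+A\bigl[c(\ep)-H(x,D\phi_\ep,t)+H_p\cdot D\phi_\ep\bigr]. \]
At a point $(x_0,t_0)$ where $w$ attains its maximum on $\T^d\times\mS^1$ one has $w_t=0$, $Dw=0$, $\Delta w\le 0$, so the right-hand side is nonpositive there, which rearranges into
\[ A\bigl[H_p\cdot D\phi_\ep-H+c(\ep)\bigr]\ \le\ |H_x|\,|D\phi_\ep|. \]

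Assuming $|D\phi_\ep(x_0,t_0)|\ge K$ (otherwise we already have the desired bound at the extremum), the growth condition \eqref{growth} yields $|H_x|\le K\bigl[H_p\cdot D\phi_\ep-H+\inf H(\cdot,0,\cdot)\bigr]$; combining this with $c(\ep)\ge\inf H(\cdot,0,\cdot)$ and an appropriate choice of $A$ in terms of $K$ and a uniform oscillation bound on $\phi_\ep$ (obtained, for instance, by comparison with the strict $C^k$ subsolution of \eqref{HJT} provided by Theorem~\ref{subcrit}) forces an absolute bound on $|D\phi_\ep(x_0,t_0)|$ and hence on $\max w$, giving the spatial Lipschitz estimate. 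The bound on $\phi_{\ep,t}$ then follows either by the parallel Bernstein argument applied to $|\phi_{\ep,t}|^2/2$, using the local boundedness of $H_t$, or directly from \eqref{HJB} once $D\phi_\ep$ is controlled. The hard part lies in the very last algebraic step: exploiting \eqref{growth} together with the sign of $c(\ep)-\inf H(\cdot,0,\cdot)$ to convert the maximum-principle inequality into an upper (and not merely a lower) bound on $|D\phi_\ep|$ at $(x_0,t_0)$; this is precisely the role played by the growth hypothesis.
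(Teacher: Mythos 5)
Your preliminary step (the two-sided bound on $c(\ep)$ by evaluating at interior extrema of $\phi_\ep$) and your treatment of the time derivative (a second Bernstein argument, or reading $\phi_{\ep,t}$ off the equation once $D\phi_\ep$ and $\ep\Delta\phi_\ep$ are controlled) coincide with the paper. The gap is exactly where you flag it: the last algebraic step does not close with the auxiliary function $w=|D\phi_\ep|^2/2+A\phi_\ep$. At the maximum of $w$ you correctly arrive at
\[
A\bigl[H_p\cdot D\phi_\ep-H+c(\ep)\bigr]\ \le\ |H_x|\,|D\phi_\ep|,
\]
but hypothesis \eqref{growth} only gives $|H_x|\le K\bigl[H_p\cdot p-H+\inf H(\cdot,0,\cdot)\bigr]\le K\,Q$ with $Q:=H_p\cdot p-H+c(\ep)\ge 0$. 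Substituting, the inequality becomes $AQ\le KQ\,|D\phi_\ep|$, i.e.\ (when $Q>0$) the \emph{lower} bound $|D\phi_\ep(x_0,t_0)|\ge A/K$, never an upper bound: the ``bad'' term is linear in $|D\phi_\ep|$ times a quantity comparable to $Q$, while the ``good'' term is only $A$ times $Q$, so for $|D\phi_\ep|$ large the bad term always wins, no matter how $A$ is chosen. (Your scheme would work if $H_x$ were bounded uniformly in $p$, as in the mechanical example, but not under \eqref{growth} alone, which allows $|H_x|$ to grow like $H_p\cdot p-H$.) There is also a secondary circularity: you propose to choose $A$ using a uniform oscillation bound on $\phi_\ep$, but for the ergodic problem that oscillation bound is normally a consequence of the gradient bound, and comparison with a strict subsolution of \eqref{HJT} does not directly yield it for the viscous equation.

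The paper's fix is the exponential change of unknown of Barles--Souganidis: set $e^{w_\ep}=\psi_\ep=\max\phi_\ep-\phi_\ep+1$, so $w_\ep\ge 0$ and $D\phi_\ep=-\psi_\ep Dw_\ep$. The equation for $w_\ep$ acquires a zero-order term $b(x,t,u,p)=e^{-u}[H(x,-e^up,t)-c(\ep)]$ with $b_u=e^{-u}[(H_p\cdot q-H)(x,q,t)+c(\ep)]$ at $q=-e^up$, and \eqref{growth} together with $c(\ep)\ge\inf H(\cdot,0,\cdot)$ gives $b_u\ge |b_x|/K$ whenever $|q|\ge K$ (which follows from $|Dw_\ep|>K$ since $w_\ep\ge0$). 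Running Bernstein on $f=|Dw_\ep|^2$, the good term at the maximum is $b_uf$, which is \emph{quadratic} in $|Dw_\ep|$, and it dominates the linear term $|b_x|\,|Dw_\ep|$ precisely when $|Dw_\ep|>K$; this yields $|Dw_\ep|\le K$ with no a priori oscillation bound. The bound on $\psi_\ep$ (hence the oscillation bound and then $|D\phi_\ep|\le \psi_\ep|Dw_\ep|$) is obtained afterwards from $|Dw_\ep|\le K$ and a parabolic maximum principle. You should replace your auxiliary function by this exponential transformation; as written, your argument proves nothing at the maximum point.
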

\begin{proof}
We first observe that  
\begin{equation}\label{cotcep}
\inf_{(x,t)} H(x,0,t)\leq c(\ep)\leq\sup_{(x,t)} H(x,0,t).
\end{equation}
Indeed, if $\phi_\ep$ has a minimum at $(\xb,\tb)$,  
\[D\phi_\ep(\xb,\tb)=0, d_t\phi_\ep(\xb,\tb)=0, \Delta\phi_\ep(\xb,\tb)\ge 0,\] 
so
\[c(\ep)=\ep\Delta\phi_\ep(\xb,\tb)+H(\xb,0,\tb)\ge\inf_{(x,t)} H(x,0,t).\] 
and similarly for the other inequality. 

We first prove that $D\phi_\ep$ is uniformly bounded.
To do that we follow \cite{bar} defining $w_\ep$ and $\psi_\ep$ by
\[\exp (w_\ep)=\psi_\ep=\max\phi_\ep-\phi_\ep+1,\]
and proving that for $K$ given in hypothesis (5), $|Dw_\ep|\leq K$.

Assuming we have this bound we prove that $\psi_\ep$ is uniformly bounded. 
Since $D\phi_\ep=-\psi_\ep Dw_\ep$, we get that
$D\phi_\ep$ is uniformly bounded.

Let $(x_\ep,t_\ep)$ be a point where $\phi_\ep$ attains its maximum.
Then $ w_\ep(x,t_\ep)\le Kd(x,x_\ep)\le K$  
and so $\psi_\ep(x,t_\ep)\le e^K$ for any $x\in\T^ d$. 
That $\psi_\ep$ is uniformly bounded follows from the maximum
principle. Indeed, let
\begin{align*}
  a&=\max_{(x,t)}H(x,0,t)-\min_{(x,t)} H(x,0,t),\\
v(x,t)&=\psi_\ep(x,t+t_\ep)-e^K+(a+\de)t.
\end{align*}
Since $v(x,0)\le 0$, if $v(x_0,t_0)>0$ for some $x_0,t_0<0$, there is a time
$\tb\in[t_0,0]$ and a point $\xb\in\T^d$ where $v(\xb,\tb)=0$ and
\[v_t(\xb,\tb)\le 0,\; D\phi_\ep(\xb,\tb+t_\ep)=Dv(\xb,\tb)=0,\; 
\Delta v(\xb,\tb)\le 0
\] 
Then
\begin{align*}
0\ge v_t(\xb,\tb) &=-c(\epsilon)-\ep\Delta v(\xb,\tb+t_\ep)+
H(\xb,0,\tb+t_\ep)+a+\de \\ 
   &\ge-\max_{(x,t)}H(x,0,t)+\min_{(x,t)} H(x,0,t)+a+\de\\
   &=\de.
\end{align*}
This contradiction shows that $v(x,t)\le 0$. Since $\de$ is arbitrary
and $\psi_\ep$ is periodic 
\[\psi_\ep(x,t)\le\max_{(x,t)}H(x,0,t)-\min_{(x,t)}H(x,0,t) +e^K.\]
Functions $w_\ep$ satisfy
\begin{equation}\label{ecw}
-d_tw_\ep-\ep|Dw_\ep|^2-\ep\Delta w_\ep+b(x,t,w_\ep,Dw_\ep)=0.
\end{equation}
where $b(x,t,u,p)=\exp(-u)[H(x,-\exp u\cdot p,t)-c(\ep)]$.

We prove $|Dw_\ep|\leq K$ using Bernstein's method, so let
$f=|Dw_\ep|^2$ and compute
\begin{align}  \label{bernstein} 
f_t&=2d_tDw_\ep Dw_\ep\\
\label{bernstein1}  Df&=2D^2 w_\ep Dw_\ep \\
  \Delta f&=2|D^2 w_\ep|^2+2D(\Delta w_\ep) Dw_\ep.
  \label{bernstein2}
\end{align}
Differentating \eqref{ecw} respect to $x$, multiplying by
$Dw_\ep$ and using 
\eqref{bernstein} \eqref{bernstein1} \eqref{bernstein2}
\begin{align*}
d_tDw_\ep Dw_\ep+DfDw_\ep+\ep D(\Delta w_\ep) Dw_\ep
 -b_x Dw_\ep-b_u f-b_p D^2 w_\ep Dw_\ep&=0\\
\frac 12f_t +DfDw_\ep+\frac{\ep}2\Delta f -\ep|D^2 w_\ep|^2
-b_x Dw_\ep-b_u f-\frac 12 b_pDf& =0
\end{align*}
If $f$ attains its maximum at $z_0=(x_0,t_0)$, then
\[f_t(z_0)=0, Df(z_0)=0, \Delta f(z_0)\le 0 \]
and so
\begin{equation}  \label{neg}
  \ep|D^2 w_\ep(z_0)|^2+b_x Dw_\ep(z_0)+b_u f(z_0)\le 0.
\end{equation}
We have
\begin{align*}
  b_x(x,t,u,p)&=\exp(-u)H_x(x,-\exp u\cdot p,t)\\
b_u(x,t,u,p)&=\exp(-u) [(H_p\cdot p-H)(x,-\exp u\cdot p,t)+c(\ep)].
\end{align*}
From (\ref{cotcep}) we get
\[b_u\ge\exp(-u)[(H_p\cdot p-H)(x,-\exp u\cdot p,t)+\inf H(x,0,t)].\]
Assume $|Dw_\ep(z_0)|>K$,  then $\exp w_\ep(z_0)|Dw_\ep(z_0)|\ge K$ 
since $w_\ep\ge 0$. From hypothesis (5), 
$b_u(z_0,w_\ep(z_0),Dw_\ep(z_0))\ge 0$ and
\begin{align*}
b_xDw_\ep(z_0)+b_uf(z_0)&>|Dw_\ep(z_0)| (b_uK-|b_x|)\\
&\ge\exp(-w_\ep(z_0)) |Dw_\ep(z_0)| \\
\cdot[K(H_p\cdot p-H+\inf H(x,0,t))&-|H_x|](x_0,-\exp w_\ep(z_0)Dw_\ep(z_0),t_0)\\
&\ge 0
\end{align*}
which contradicts \eqref{neg}.

We use again Bernstein's method to prove that $d_t\phi_\ep$,  
is uniformly bounded, so let $g=d_t\phi_\ep^2+|D\phi_\ep|^2$ and compute
\begin{align}  \label{eq:bernstein} 
g_t&=2 d_t\phi_\ep d_{tt}\phi_\ep +2d_tD\phi_\ep D\phi_\ep\\
\label{eq:bernstein1}  Dg&=2 d_tD\phi_\ep d_t\phi_\ep +2D^2\phi_\ep D\phi_\ep \\
  \Delta g&=2|d_tD\phi_\ep|^2+2d_t(\Delta\phi_\ep)+
2|D^2\phi_\ep|^2+2D(\Delta\phi_\ep) D\phi_\ep.
  \label{eq:bernstein2}
\end{align}

Differentating \eqref{HJB} first respect to $t$ and
multiplying by $d_t\phi_\ep$, then respect to  $x$ and multiplying by
$D\phi_\ep$, adding the results and using 
\eqref{eq:bernstein} \eqref{eq:bernstein1} \eqref{eq:bernstein2}

\begin{align*}
d_t\phi_\ep d_{tt}\phi_\ep+d_tD\phi_\ep D\phi_\ep
+\ep d_t(\Delta\phi_\ep)d_t\phi_\ep
+\ep D(\Delta\phi_\ep) D\phi_\ep&\\
  +H_td_t\phi_\ep+H_x D\phi_\ep+H_p
(d_tD\phi_\ep d_t\phi_\ep +D^2\phi_\ep D\phi_\ep) &=0\\
\frac 12g_t +\frac{\ep}2\Delta
g+H_td_t\phi_\ep+H_x D\phi_\ep+\frac 12 H_p
Dg-\ep|D^2\phi_\ep|^2-\ep |d_tD\phi_\ep|^2& =0
\end{align*}
If $g$ attains its maximum at $z_0\in\T^{d+1}$, then
\[g_t(z_0)=0, Dg(z_0)=0, \Delta g(z_0)\le 0. \]
If $C$ is a bound for $H_t(x,D\phi_\ep,t)$, 
$H_x(x,D\phi_\ep,t) D\phi_\ep$ and
$H(x,D\phi_\ep,t)-c(\ep)$, then at the point $z_0$ we have
\begin{align*}
\ep(\Delta\phi_\ep)^2&\le d\ep|D^2\phi_\ep|^2\le
d(H_td_t\phi_\ep+H_x D\phi_\ep)\\
&\le dC(|d_t\phi_\ep|+1)\le
dC(|c(\ep)-\ep\Delta\phi_\ep-H(x,D\phi_\ep,t)|+1)\\
&\le dC|\ep\Delta\phi_\ep|+dC^2+dC
\end{align*}
Thus, the values $\ep\Delta\phi_\ep(z_0)$, $d_t\phi_\ep(z_0)$ and 
$g(z_0)$ are bounded.
\end{proof}

The solution to the viscous equation \eqref{HJB} can be
characterized by a variational formula. We need to introduce a
probability space $(\Om,{\mathcal B},\PP)$ endowed with a brownian
motion $W(t):\Omega\to \T^d$ on the flat $d$-torus. We denote by $\E$ 
the expectation defined by the probability measure $\PP$. 

The solution to equation \eqref{HJB} satisfies Lax's formula
\begin{equation}
    \label{eq:lax0}
    \phi_\ep(x,t)=\sup_v \E\Bigl(\phi_\ep(X_\ep(\tau),\tau)-\int_t^\tau
L(X_\ep(s),v(s),s)ds+c(\ep)(t-\tau)\Bigr),
  \end{equation}
where $v$ is an admissible progressively measurable control process,
$\tau$ is a bounded stopping time
and $X_\ep$ is the solution to  the stochastic differential equation
\begin{equation}
  \label{eq:stoc}
\begin{cases}
  dX_\ep(s) &=v(s)ds+\sqrt{2\ep}\,dW(s)\\
X_\ep(t)&=x.
 \end{cases}
\end{equation}
See \cite{Flem} Lemma IV 3.1.

\begin{lemma}\label{semi}
The periodic solutions  $\phi_\ep $ of \eqref{HJB} are
uniformly semiconvex in the spatial variable. 
\end{lemma}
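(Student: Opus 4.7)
The plan is to use Lax's stochastic variational formula \eqref{eq:lax0} with terminal time $\tau=t+1$, exploiting a coupling of three SDEs driven by a common Brownian motion so as to compare $\phi_\ep$ at $x$, $x+y$, and $x-y$ for $y\in\R^d$. Fix $(x,t)\in\T^d\times\mS^1$ and $|y|\le 1$, and let $v^*$ be the optimal control at $(x,t)$; by the classical verification theorem (valid since $\phi_\ep$ is smooth for $\ep>0$ thanks to the uniform parabolicity of \eqref{HJB}), this optimal control is the feedback $v^*(s)=H_p(X^*(s),D\phi_\ep(X^*(s),s),s)$, where $X^*$ solves \eqref{eq:stoc} driven by $v^*$. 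Lemma \ref{lip} furnishes a uniform bound $\|D\phi_\ep\|_\infty\le K_1$ independent of $\ep$, so
\[\|v^*\|_\infty\le R:=\sup\{|H_p(x,p,t)|:x\in\T^d,\,|p|\le K_1,\,t\in\mS^1\},\]
again independent of $\ep$.

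First I would run, on the same probability space, the shifted controls $v^*\mp y$ starting from $x\pm y$ respectively. Since the diffusion part of \eqref{eq:stoc} depends neither on $x$ nor on $v$, one computes directly that
\[X_\ep^{x\pm y,\,v^*\mp y}(s)=X^*(s)\pm y\bigl(1-(s-t)\bigr),\qquad s\in[t,t+1],\]
so all three coupled trajectories coincide at the terminal time $s=t+1$. Applying \eqref{eq:lax0} as an inequality at $x\pm y$ and as an equality at $x$ kills both the terminal $\phi_\ep$ terms and the $c(\ep)(t-\tau)$ contributions, leaving
\[\phi_\ep(x+y,t)+\phi_\ep(x-y,t)-2\phi_\ep(x,t)\ge -\E\int_t^{t+1}\bigl(L_++L_--2L\bigr)ds,\]
where $L_\pm=L(X^*(s)\pm y(1-(s-t)),\,v^*(s)\mp y,\,s)$ and $L=L(X^*(s),v^*(s),s)$. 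Because $L$ is $C^2$ with uniformly bounded Hessian on the compact set $\T^d\times\overline{B_{R+1}}\times\mS^1$, Taylor's theorem yields $L_++L_--2L\le C|y|^2$ pointwise, and integrating gives $\phi_\ep(x+y,t)+\phi_\ep(x-y,t)-2\phi_\ep(x,t)\ge -C|y|^2$ with $C$ uniform in $\ep$ and in $(x,t)$. For $|y|\ge 1$ the inequality holds trivially from the uniform $L^\infty$ bound on $\phi_\ep$ obtained by integrating $\|D\phi_\ep\|_\infty$ after normalizing the additive constant.

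The main obstacle to making this argument work uniformly in $\ep$ is confining the optimal control $v^*$ (and its shifts $v^*\mp y$) to a fixed ball in velocity space so that $\|D^2L\|_\infty$ is finite and independent of $\ep$; this is resolved precisely by combining the feedback formula with Lemma \ref{lip}. A secondary technical point is the existence of a bona fide optimal control rather than merely a near-optimal one; as noted above, this is provided by the verification theorem, but if one prefers to avoid invoking smoothness of $\phi_\ep$ one can instead work with a $\delta$-near-optimal adapted control truncated at level $R$, showing via superlinearity of $L$ that the truncation error is $O(\delta)$, and then letting $\delta\to 0$.
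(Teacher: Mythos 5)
Your proposal is correct and follows essentially the same route as the paper: both use Lax's formula \eqref{eq:lax0} with the optimal feedback control $H_p(x,D\phi_\ep,t)$ (bounded uniformly in $\ep$ via Lemma \ref{lip}), couple the three trajectories through a common Brownian motion with a linearly interpolated spatial shift so they coincide at the terminal time, and conclude by a second-order Taylor expansion of $L$ on a compact velocity set. The only cosmetic differences are your choice of terminal time $\tau=t+1$ versus the paper's $t\in[-2,-1]$ with terminal time $0$, and your explicit remarks on the $|y|\ge 1$ case and on near-optimal controls.
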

\begin{proof}
We have the following  description of the optimal $v$,
see for example \cite{Flem} Theorem IV 11.1.: 
Introduce the time dependent vector field
$U_\ep(x,t)=H_p(x,D\phi_\ep(x,t),t)$ and consider the solution $X_\ep(s)$
of the stochastic differential equation
\begin{equation}  \label{eq:optistoc}
\begin{cases}
  dX_\ep(s) &=U_\ep(X_\ep(s),s)ds+\sqrt{2\ep}\,dW(s)\\
X_\ep(t)&=x,
 \end{cases}
\end{equation}
then an optimal control in \eqref{eq:lax0} is given by the formula 
$v(s)=U_\ep(X_\ep(s),s)$.
Let $x\in\T^d$, $t\in[-2,-1]$ and take that optimal control, then 
\[ \phi_\ep(x,t)=
\E\bigl(\phi(X_\ep(0)+\int_t^0L(X_\ep(s),U_\ep(X_\ep(s),s),s)ds\bigr)+c(\ep)t\]

Let $|y|<1$ be an increment, the controls $U_\ep(X_\ep(s),s)\pm\dfrac yt$
are admissible and then
\[\phi_\ep(x\pm y,t)\ge\E\bigl(\phi(X_\ep(0)-
\int_t^0L(X_\ep(s)\pm\frac{sy}t,U_\ep(X_\ep(s))\pm\frac yt,s)ds\bigr)+c(\ep)t\] 
Let
$$M=1+\sup\limits_{\ep\in(0,1]}|U_\ep(x,s)|,$$
This is finite by Lemma \ref{lip}. Define now
$$A=\sup\limits_{|v|\le M}\|D_{xx}L(x,v,s)\|,
B=\sup\limits_{|v|\le M}\|D_{xv}L(x,v,s)\|,
C=\sup\limits_{|v|\le M}\|D_{vv}L(x,v,s)\|.$$
An application of Taylor's Theorem gives
\[L(x+\frac{sy}t,v+\frac yt,s)-2L(x,v,s)+L(x-\frac{sy}t,v-\frac yt,s)\le
A|\frac{sy}t|^2+2Bs|\frac yt|^2+C|\frac yt|^2 \]
for $|v|\le M-1$. Therefore
\begin{align*}
\phi_\ep(x+y,t) -2\phi_\ep(x,t) +\phi_\ep(x-y,t) &\ge
 -\int_t^0(As^2+2Bs+C)|\frac yt|^2 ds\\
&\ge\bigl(\frac{At}3-B+\frac Ct\bigr)|y|^2.
\end{align*}
\end{proof}

We will need the following

\begin{lemma}\label{convergeC1}
Suppose  the sequence $\phi_{\ep_n}$ of solutions of
\eqref{HJB}
converges uniformly to $\phi_0$. Assume that $\phi_0$ is differentiable 
in an open neighborhood $V$ of a periodic orbit. 
Then $D\phi_{\ep_n}$ converges to $D\phi_0$ 
uniformly in every compact subset of $V$.
\end{lemma}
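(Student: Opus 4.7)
The plan is to combine the uniform spatial semiconvexity of $\phi_{\ep_n}$ from Lemma \ref{semi} with the uniform Lipschitz bound from Lemma \ref{lip}, and to run a contradiction argument using the one-sided quadratic bound enjoyed by semiconvex functions. The key input is that, because $\phi_\ep$ is smooth, $D\phi_\ep(x,t)$ is the unique element of the spatial superdifferential of $\phi_\ep(\cdot,t)$ at $x$.

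First, I would record the pointwise inequality coming from semiconvexity: by Lemma \ref{semi} there is $C>0$ independent of $\ep\in(0,1]$ such that
\[\phi_{\ep}(y,t) - \phi_{\ep}(x,t) - D\phi_\ep(x,t)\cdot(y-x) \le \tfrac{C}{2}|y-x|^2\]
for all $x,y\in\T^d$ and $t\in\mS^1$. Passing to the limit $n\to\infty$, the same bound is inherited by $\phi_0$, so $\phi_0(\cdot,t)$ is semiconvex with constant $C$. Combined with the assumption that $\phi_0$ is differentiable on $V$, this forces the spatial superdifferential of $\phi_0$ at every point of $V$ to reduce to the singleton $\{D\phi_0(x,t)\}$; the standard upper semicontinuity of superdifferentials of semiconvex functions then implies that $(x,t)\mapsto D\phi_0(x,t)$ is continuous on $V$.

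Next, fix a compact set $K\subset V$ and suppose for contradiction that uniform convergence fails, so that there exist $\de>0$ and $(x_n,t_n)\in K$ with $|D\phi_{\ep_n}(x_n,t_n)-D\phi_0(x_n,t_n)|\ge\de$. Compactness of $K$ and the uniform Lipschitz bound from Lemma \ref{lip} allow me to extract a subsequence along which $(x_n,t_n)\to(x_*,t_*)\in K$ and $D\phi_{\ep_n}(x_n,t_n)\to p$ for some $p\in\R^d$. Taking the limit in the semiconvexity inequality at $(x_n,t_n)$ gives
\[\phi_0(y,t_*) - \phi_0(x_*,t_*) - p\cdot(y-x_*) \le \tfrac{C}{2}|y-x_*|^2.\]
A Taylor expansion of $\phi_0$ around $(x_*,t_*)$, available by differentiability, forces $p=D\phi_0(x_*,t_*)$, and then continuity of $D\phi_0$ gives $D\phi_0(x_n,t_n)\to p$ as well, contradicting $|D\phi_{\ep_n}(x_n,t_n)-D\phi_0(x_n,t_n)|\ge\de$.

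The main obstacle is passing from the pointwise identification of the limit gradient (via semiconvexity plus differentiability) to uniform convergence on compact subsets of $V$; this is handled by the extraction/contradiction argument above, which relies crucially on the \emph{uniformity} of the semiconvexity constant and of the Lipschitz bound in $\ep$. All remaining steps are routine.
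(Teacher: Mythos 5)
Your argument is correct and is essentially the paper's: the paper deduces the lemma from Lemma \ref{semi} together with the Rockafellar-type theorem stated right after it (the extension of Theorem 25.7 of \cite{Roc}) on uniform convergence of gradients for uniformly convergent sequences of convex differentiable functions with differentiable limit, and your compactness/contradiction argument is precisely the proof of that theorem, run after absorbing the uniform semiconvexity constant. One small sign correction: semiconvexity yields the lower bound $\phi_\ep(y,t)-\phi_\ep(x,t)-D\phi_\ep(x,t)\cdot(y-x)\ge-\tfrac{C}{2}|y-x|^2$ (a subdifferential-type inequality), not the upper bound you wrote; the passage to the limit and the identification of $p$ with $D\phi_0(x_*,t_*)$ via Taylor expansion work verbatim with the corrected inequality.
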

This is an easy consequence of Lemma \ref{semi} and next
theorem, which is a slight extension of Theorem 25.7 in \cite{Roc}
and follows the same proof
\begin{theorem}
Let $A\subset\R^n$ be open convex, $B\subset\R^m$ be open, and 
$f_n:A\times B\to\R$ be a sequence of differentiable functions, convex in the 
first variable, converging uniformly to a differentiable function $f$.
Then $D_1f_n$ converges pointwise to $D_1f$, and in fact uniformly on
compact subsets.  
\end{theorem}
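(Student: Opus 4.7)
The plan is to follow the strategy of Rockafellar's Theorem 25.7 almost verbatim, with the parameter $y\in B$ carried along passively. The convexity in the first variable is the only structural ingredient used, so the extra variable causes no real trouble once local boundedness is established uniformly in $(x,y)$.

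First I would check that $D_1 f_n$ is locally uniformly bounded on $A\times B$. Since $f_n\to f$ uniformly and $f$ is continuous, the sequence $(f_n)$ is locally uniformly bounded. For any compact $K\subset A\times B$ choose $r>0$ so that the ``slab'' $K_r=\{(x',y):\exists (x,y)\in K,\ |x'-x|\le r\}$ is a compact subset of $A\times B$; then $|f_n|\le M$ on $K_r$. The standard convex-function estimate $|\nabla g(x)|\le 2M/r$ applied to the convex function $g(\cdot)=f_n(\cdot,y)$ on the ball $B_r(x)$ gives $|D_1 f_n(x,y)|\le 2M/r$ for every $(x,y)\in K$.

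Next I would upgrade pointwise convergence of $D_1 f_n$. For fixed $(x,y)\in A\times B$ and small $h\in\R^n$, the subgradient inequality in the first variable yields
\[f_n(x,y)-f_n(x-h,y)\le\langle D_1f_n(x,y),h\rangle\le f_n(x+h,y)-f_n(x,y).\]
By the bound just proved, any subsequence of $D_1f_n(x,y)$ has a further subsequence converging to some $v\in\R^n$. Passing to the limit through the displayed inequality using uniform convergence $f_n\to f$ gives
\[f(x,y)-f(x-h,y)\le\langle v,h\rangle\le f(x+h,y)-f(x,y).\]
Setting $h=t\hat h$ with $|\hat h|=1$, dividing by $t$ and sending $t\to 0^+$, the differentiability of $f$ at $(x,y)$ forces $\langle v,\hat h\rangle=\langle D_1f(x,y),\hat h\rangle$ for every unit vector $\hat h$, hence $v=D_1f(x,y)$. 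Every subsequential limit is therefore the same, giving pointwise convergence.

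Finally I would promote pointwise convergence to uniform convergence on a compact $K\subset A\times B$ by contradiction. If it failed, there would exist $\de>0$, a subsequence $n_k$, and points $(x_k,y_k)\in K$ with $|D_1f_{n_k}(x_k,y_k)-D_1f(x_k,y_k)|\ge\de$. Using compactness of $K$ and the uniform bound from Step 1, I would extract a further subsequence with $(x_k,y_k)\to(x_0,y_0)\in K$ and $D_1f_{n_k}(x_k,y_k)\to v$. Continuity of $D_1 f$ on $K$ gives $|v-D_1f(x_0,y_0)|\ge\de$. Applying the subgradient inequality at $(x_k,y_k)$ and passing to the limit via uniform convergence of $f_{n_k}$ and continuity of $f$ reproduces the squeeze from Step 2 centered at $(x_0,y_0)$, forcing $v=D_1 f(x_0,y_0)$, a contradiction.

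The only step that requires mild care is the local boundedness in Step 1: the convexity is only in $x$, so the standard $2M/r$ bound must be applied fibre-by-fibre, and one must choose the radius $r$ uniformly over the compact set $K$. Everything else is a routine transcription of Rockafellar's argument, and the continuity of $D_1 f$ on the open set where $f$ is differentiable (which justifies the last step) is inherited from the differentiability hypothesis together with the subgradient squeeze applied to $f$ itself.
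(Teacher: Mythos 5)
Your argument is correct and is essentially the proof the paper intends: the paper gives no details, merely asserting that the statement ``follows the same proof'' as Rockafellar's Theorem 25.7, and your write-up is a faithful, self-contained transcription of that argument with the parameter $y$ carried along passively (local gradient bounds from convexity in $x$, the subgradient squeeze combined with differentiability of the limit, and a compactness argument for uniformity). Your closing remark correctly identifies the one point needing care beyond Rockafellar, namely the joint continuity of $D_1 f$, which indeed follows from the same subgradient squeeze applied to $f$ itself.
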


\section{Reduction to a regular Lagrangian}
\label{sec:regular}
In this section we show how to deduce THEOREM from the 
case when the Lagrangian is regular.

Let $N$ be the least common multiple of the periods of the
orbits $\Ga_1,\ldots,\Ga_m$ of the Aubry set. Define
\begin{align}\label{Levanta}
P_N:\T^d\times\R^d\times\mS^1 & \to \T^d\times\R^d\times\mS^1\\
                  (x,v,[t])  & \mapsto (x,\frac vN,[Nt]),\nonumber
\end{align}
and the Lagrangian $L_N=L\circ P_N$. The corresponding Hamiltonian is
given by 
\[H_N(x,p,t)=H(x,Np,Nt).\]
For a curve $\ga:[a,b]\to\T^d$ define
$\ga^N:[a/N,b/N]\to\T^d, t\mapsto\ga(Nt)$, then 
$NA_{L_N}(\ga^N)=A_L(\ga)$.
A curve $\ga$ is an extremal (minmizer)  of $L$ if and
only if the curve $\ga^N$ is an extremal (minimizer) of $L_N$.

Let $\ga^N_{i,j}(t)=\ga_i(Nt-j),j\in[1,N_i],i\in[1,m]$.
According to sections 3, 5 of \cite{B1}, the Aubry set of $L_N$ is the
union of the hyperbolic 1-periodic orbits 
$\Ga_{i,j}^N(t)=(\ga^N_{i,j}(t)\dga^N_{i,j}(t),[t])$ and $L_N$ is regular. 

For $\ep\ge 0$, a function $u:\T^d\times\R\to\R$ is a viscosity
solution of \eqref{HJB} if and only if $w(x,t)=\frac 1Nv(x,Nt)$ is a
viscosity solution of 
\begin{equation}
  \label{eq:hjlev}
  w_t+N\ep\Delta w+H_N(x,Dw,t)=c(\ep).
\end{equation}
\begin{lemma}\label{barreraN}
$h(x,[t],\xb_i)=N\min\limits_{j\in[1,N_i]}h_N(x,[\frac tN]),(x_i,[\frac jN])$
\end{lemma}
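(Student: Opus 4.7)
The plan is to pull back the Peierls barrier $h$ to the lifted system via the rescaling $\ga\mapsto\ga^N$, $\ga^N(s)=\ga(Ns)$, decompose the resulting $\liminf$ by time residues mod $N$, and then collapse the indexed minimum to $j\in[1,N_i]$ using the symmetries of $L_N$ and the closed-orbit structure.

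First I would record the scaling identity $A_L(\ga)=N\,A_{L_N}(\ga^N)$, which gives $F_{a,b}(x,x_i)=N\,F^N_{a/N,b/N}(x,x_i)$. Together with $c(L)=c(L_N)$ (which holds because the correspondence bijects $L_N$-closed curves with $N$-fold iterates of $L$-closed ones, preserving $A_{L+k}$ up to the factor $N$),
\[F_{a,b}(x,x_i)+c(L)(b-a)=N\bigl[F^N_{a/N,b/N}(x,x_i)+c(L_N)(b/N-a/N)\bigr].\]
Then I would parametrise the admissible endpoints as $a=t+k_0+N\ell$, $b=j+N\ell'$ with $k_0,j\in\{0,\dots,N-1\}$ and $\ell,\ell'\in\Z$. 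This gives $[a/N]=[(t+k_0)/N]$, $[b/N]=[j/N]$, and as $b-a\to\infty$ at a fixed residue pair one has $\lfloor b/N-a/N\rfloor\to\infty$. Splitting the $\liminf$ by $(k_0,j)$ produces
\[h(x,[t],\xb_i)=N\min_{k_0,j}h_N\bigl((x,[(t+k_0)/N]),(x_i,[j/N])\bigr).\]

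Next I would exploit that $L_N$ is $1/N$-periodic in $t$: the shift $\sigma:[s]\mapsto[s+1/N]$ is a symmetry of its Euler--Lagrange flow, so $h_N\circ(\sigma,\sigma)=h_N$. Applying $\sigma^{-k_0}$ in both slots simultaneously moves $k_0$ from the first argument into the second, giving
\[h(x,[t],\xb_i)=N\min_{\ell\in\{0,\dots,N-1\}}h_N\bigl((x,[t/N]),(x_i,[\ell/N])\bigr),\]
where $\ell=(j-k_0)\bmod N$. Finally I would show that $\ell\mapsto h_N((x,[t/N]),(x_i,[\ell/N]))$ is $N_i$-periodic, collapsing the range to $\{1,\dots,N_i\}$: both $(x_i,[\ell/N])$ and $(x_i,[(\ell+N_i)/N])$ lie on the same lifted Aubry orbit $\Ga^N_{i,\,\ell\bmod N_i}$, at times separated by $N_i/N$, and the $(L_N+c(L_N))$-action of the arc joining them vanishes. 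The vanishing follows because $\ga_i$ is a closed calibrating orbit, so $A_{L+c(L)}(\ga_i|[0,N_i])=0$, and this zero transports to $L_N$ through the scaling identity; the complementary arc of length $1-N_i/N$ is likewise zero-action, since the full period is. Prolonging any minimizing sequence for one Peierls barrier along this zero-cost arc yields a minimizing sequence for the other, establishing the periodicity.

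The main technical obstacle will be this final collapsing step: one must argue $N_i$-periodicity at the level of $\liminf$s of action-minimizing sequences rather than of pointwise minimizers, checking that the prolonged sequences still satisfy $\lfloor b/N-a/N\rfloor\to\infty$. A minor preliminary point is verifying $c(L)=c(L_N)$ via the closed-curve correspondence, i.e.\ that every $L_N$-closed $\tilde\ga$ lifts to $\ga(t)=\tilde\ga(t/N)$ which is $L$-closed, and every $L$-closed $\ga$ iterated $N$ times yields an $L_N$-closed curve, with actions in each case preserved up to the factor $N$.
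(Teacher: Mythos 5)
Your proof is correct and follows essentially the same route as the paper's: the scaling identity $F_{a,b}=NF^N_{a/N,b/N}$ together with $c(L)=c(L_N)$, the decomposition of the $\liminf$ into finitely many residue classes where regularity of $L_N$ turns each into a genuine limit equal to an $h_N$-value, and the collapse from $N$ to $N_i$ values via the zero $(L_N+c)$-action arcs of the lifted Aubry orbit (the paper's computation $h_N(x_i,[\tfrac jN],x_i,[\tfrac{j+kN_i}N])=0$). The only cosmetic differences are that the paper fixes $a=t$ outright using the $1$-periodicity of $L$, so it never needs your extra residue $k_0$ or the $\sigma$-symmetry step, and that it leaves $c(L)=c(L_N)$ implicit.
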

\begin{proof}
Since $L_N$ is regular 
\[h_N(x,[t],y,[s])=\lim_{n\to\infty}F^N_{t,s+n}(x,y)+c(0)(s+n-t)\]
Since 
  \[F_{t,j+nN}(x,x_i)+c(j+nN-t)=NF^N_{\frac tN,\frac jN+n}(x,x_i)+c(j+nN-t),\]
the sequence $(F_{t,n}(x,x_i)+c(0)(n-t))_{n\in\N}$ only accumulates to the values
\[Nh_N(x,[\frac tN],x_i,[\frac jN]), j\in[1,N].\]
Since
\[h_N(x_i,[\frac jN],x_i,[\frac{j+kN_i}N])=
A_{L_N+c(0)}(\ga_{ij}^N|_{[\frac jN,\frac{j+kN_i}N]})=\frac kNA_{L+c(0)}(\ga_i)=0\]
for $k\in\N$, there are in fact only $N_i$ accumulation values and then
\[h(x,[t],\xb_i) = \liminf _{n\to\infty}F_{t,n}(x,x_i)
=\min_{i\in[1,N_i]} Nh_N(x,[\frac tN],x_i,[\frac jN]), i\in[1,N].\]
\end{proof}
From Lemma \ref{barreraN}, in a
neighborhood of $(\ga_i(Nt-j),[t])$ we have 
\[Nh_N(x,[t],x_i,[\frac jN])=h(x,[Nt],\xb_i).\]
Then 
\begin{equation}
  \label{eq:lapN}
\lam^N_{ij}=\int_0^1\Delta h_N(\ga_i(Nt-j),[t],x_i,[\frac jN])dt
=\frac 1{N_i}\int_0^{N_i}\Delta h(\gab_i(t),\xb_i)dt=\lam_i. 
\end{equation}
\begin{proof}[Proof of THEOREM for $L$ assuming it holds for $L_N$]
Let $\phi_\ep$ be a periodic solution of \eqref{HJB} and suppose
$\phi_{\ep_n}$ converges to $\phi_0$ for a sequence $\ep_n\to 0$. 
The solutions $\psi_{\ep_n}(x,t)=\frac 1N\phi_{\ep_n}(x,Nt)$ of 
\eqref{eq:hjlev} converge to $\psi_0(x,t)=\frac 1N\phi_0(x,Nt)$. 
For $j\in[1,N], \psi_0(x,\frac jN)=\frac 1N\phi_0(x,0)$ and so
\[\psi_0(\gab^N_i(0))=\frac 1N\phi_0(\xb_i)-h_N(\gab^N_i(0),x_i,[\frac jN]).\] 
From \eqref{eq:lapN}, THEOREM for $L_N$ and Lemma \ref{barreraN},
\begin{align*}
\frac 1N\phi_0(x,Nt)&
=\max_{\lam_i=\lab}\max_{j\in[1,N_i]}\psi_0(\gab^N_i(0))-h_N(x,[t],\gab^N_i(0))\\
&=\max_{\lam_i=\lab}\max_{j\in[1,N_i]}
\frac 1N\phi_0(\xb_i)-h_N(x,[t],x_i,[\frac jN])\\
&=\frac 1N\max_{\lam_i=\lab}\phi_0(\xb_i)-h(x,[Nt],\xb_i).
\end{align*}
\end{proof}
\section{Regular Lagrangians}
\label{regular}
In this section we assume that the Lagrangian is regular.
Let $f:\T^{d+1}\to\R$ be a strict $C^k$ subsolution of \eqref{HJT} given by 
Theorem \ref{subcrit} and consider the Lagrangian
\[\mL(x,v,t)=L(x,v,t)-Df(x,[t])v-f_t(x,[t])+c\]
with Hamiltonian $\mH(x,p,t)=H(x,p+Df,[t])+f_t(x,[t])-c$. 
If $\af\in\cC(x,s,y,t)$, $A_{\mL}(\af)=A_{L+c}(\af)+f(x,[s])-f(y,[t])$.
Thus $L$ and $\mL$  have the same Euler Lagrange flow and projected
Aubry set and the Peierls barrier of $\mL$ is $h(z,w)-f(w)+f(z)$. Moreover,
\begin{equation}\label{mL} 
\forall (x,v,t)\; \mL(x,v,t)\ge 0,\; \tilde\cA=\{(x,v,t):\mL(x,v,t)=0\}
\end{equation}
and $u$ is a viscosity solution of \eqref{HJT}
if and only if $u-f$ is a viscosity solution of 
\begin{equation}\label{eq:mhj}
v_t+\mH(x,Dv,t)=0.  
\end{equation}
\begin{lemma}\label{root}
 Assume the Lagrangian also satisfies \eqref{mL}.
A $\phi\in\cS^+$ has a local maximum at $\gab_i$ if and only if
\begin{equation}
  \label{eq:root}
  \forall j\ne i\quad \phi(\xb_i) > \phi(\xb_j) -h(\xb_i,\xb_j).
\end{equation}
\end{lemma}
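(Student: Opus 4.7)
The plan is to use the representation $\phi(z)=\max_k[\phi_k-h(z,\xb_k)]$, valid for any $\phi\in\cS^+$ with Aubry values $\phi_k=\phi(\xb_k)$, subject to the constraint $\phi_k-\phi_i\le h(\xb_i,\xb_k)$. Two consequences of \eqref{mL} are used throughout: $h\ge 0$ and $h(\gab_i(t),\gab_i(s))=0$ for every $s,t$. Together with the subadditivity $h(x,z)\le h(x,y)+h(y,z)$ of the Peierls barrier these imply $h(\gab_i(t),\xb_k)=h(\xb_i,\xb_k)$ for every $k$ and $t$, so that $\phi\equiv\phi_i$ on $\gab_i$.

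For the implication $(\Leftarrow)$, assume \eqref{eq:root}. At each $\gab_i(t)$ the maximum in the representation is attained strictly at $k=i$, and by continuity of $h$ this strict inequality persists on a neighborhood $U$ of $\gab_i(t)$; hence $\phi(z)=\phi_i-h(z,\xb_i)\le\phi_i$ on $U$, giving the local maximum.

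For the implication $(\Rightarrow)$, suppose $\phi$ has a local maximum at $\gab_i$ and, aiming at a contradiction, that some $j\ne i$ saturates the constraint, $\phi_i=\phi_j-h(\xb_i,\xb_j)$. The key step is to construct a minimizing heteroclinic $\eta:\R\to M$ with $\bar\eta(t)\to\gab_i$ as $t\to-\infty$, $\bar\eta(t)\to\gab_j$ as $t\to+\infty$, and total action $\int_\R L\,dt=h(\xb_i,\xb_j)$. One obtains $\eta$, after a time-shift, as a locally uniform limit of finite minimizers $\ga_n\in\cC(x_i,-a_n,x_j,b_n)$ with $[-a_n]=[b_n]=[0]$ and $A_L(\ga_n)\to h(\xb_i,\xb_j)$: regularity of $L$ provides the sequence, while the uniform lower bound $L\ge\delta(\ep)>0$ off the $\ep$-neighborhood of $\tcA$ (direct from \eqref{mL}), together with hyperbolicity of the Aubry orbits, forces each $\ga_n$ to shadow $\gab_i$ on a long initial segment and $\gab_j$ on a long final one with a bounded-length transition, producing $\eta$ in the limit. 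Since $\eta$ is not the Aubry orbit on any interval (uniqueness of extremals), $L^{-1}(0)=\tcA$ yields $\int_{-\infty}^s L\,dt>0$ strictly for every $s\in\R$.

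To conclude, set $z_n=\bar\eta(-nN_i)$; after the shift, $z_n$ has time-label $[0]$ and converges to a point of $\gab_i$. Piecing $\eta|_{[-nN_i,T]}$ together with a short curve connecting $\eta(T)$ to a nearby point of $\gab_j$ (whose cost vanishes as $T\to\infty$ since $\bar\eta(T)\to\gab_j$) and a periodic extension along $\gab_j$ up to $\xb_j$ produces admissible competitors for $h(z_n,\xb_j)$; letting $T\to+\infty$ yields
\[h(z_n,\xb_j)\le\int_{-nN_i}^{+\infty}L\,dt=h(\xb_i,\xb_j)-\int_{-\infty}^{-nN_i}L\,dt<h(\xb_i,\xb_j).\]
Hence $\phi(z_n)\ge\phi_j-h(z_n,\xb_j)>\phi_i$, contradicting the local-maximum property since $z_n$ eventually enters every neighborhood of $\gab_i$. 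I expect the main obstacle to be the construction of the minimizing heteroclinic $\eta$ with the correct asymptotic phase, together with the verification that its tail action is strictly positive; both rely on Mather-type shadowing of minimizers near hyperbolic Aubry orbits and on regularity of $L$.
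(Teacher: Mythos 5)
The backward implication is fine and matches the paper's own observation: under \eqref{mL} one has $h\ge 0$, $h(\gab_i(t),\xb_i)=0$ and $h(\gab_i(t),\xb_k)=h(\xb_i,\xb_k)$, so strictness of \eqref{eq:root} propagates by continuity to a neighbourhood where $\phi=\phi(\xb_i)-h(\cdot,\xb_i)\le\phi(\xb_i)$. For the forward implication your overall strategy (minimizers $\ga_n$ from $x_i$ to $x_j$, passage to the limit using regularity of $L$, and strict positivity of the action off $\tcA$) is the same as the paper's, but you route it through the construction of a full minimizing heteroclinic $\eta$ from $\Ga_i$ to $\Ga_j$, and that is where there is a genuine gap. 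The claim that the $\ga_n$ shadow $\gab_i$, then $\gab_j$, ``with a bounded-length transition'' is not justified and is false in general: if there is an intermediate orbit $\Ga_k$ with $h(\xb_i,\xb_j)=h(\xb_i,\xb_k)+h(\xb_k,\xb_j)$, the minimizers typically linger near $\gab_k$ for a time growing with $n$, and the limit curve you extract (centered at the first exit from a neighbourhood of $\Ga_i$) is then forward-asymptotic to $\Ga_k$, not to $\Ga_j$. Your upper bound $h(z_n,\xb_j)\le\int_{-nN_i}^{+\infty}L(\eta)\,dt$ relies entirely on the connector to $\gab_j$ at $t\to+\infty$, so it collapses in that case; repairing it via $h(z_n,\xb_j)\le h(z_n,\xb_k)+h(\xb_k,\xb_j)$ requires an extra splitting identity you have not established. (The backward asymptotics to $\gab_i$ with the correct phase is also only asserted via ``Mather-type shadowing''; the paper proves the needed fact $\dga_n(0)\to\dga_i(0)$ by a concrete gluing argument using uniqueness of minimizing extremals.)

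The paper avoids the heteroclinic altogether: it takes the first exit point $(y,[\tau])$ of $\gab_n$ from the neighbourhood $V$ where $\gab_i$ is a maximum, uses the uniform convergence of $F_{a,b}$ (regularity) to get the splitting $h(\xb_i,(y,[\tau]))+h((y,[\tau]),\xb_j)=h(\xb_i,\xb_j)$, and then the chain $\phi(\xb_i)\ge\phi(y,[\tau])\ge\phi(\xb_j)-h((y,[\tau]),\xb_j)=\phi(\xb_i)+h(\xb_i,(y,[\tau]))$ forces $h(\xb_i,(y,[\tau]))\le 0$, impossible since $(y,[\tau])\in\partial V$ is bounded away from $\cA$ and \eqref{mL} forces $h(\xb_i,z)>0$ for $z\notin\cA$. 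If you replace your heteroclinic construction by this splitting identity along the $\ga_n$ themselves, your argument closes; as written, the forward-asymptotics step is a real gap.
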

From the continuity of $\phi$ and $h$, if condition \eqref{eq:root}
holds, there is a neighbourhood of $\gab_i$ where
\[\phi = \phi(\xb_i)-h(\cdot,\xb_i) \]
\begin{proof}
 Let $V$ be a neighborhood where $\gab_i$  
is a maximum of $\phi$. Suppose that there is $j\ne i$ such that
\begin{equation}\label{eq}
\phi(\xb_i) = \phi(\xb_j) -h(\xb_i,\xb_j)
\end{equation}
Let $\ga_n:[0,n]\to\T^d$ be a curve joining $x_i$ to
$x_j$ such that
\[A_L(\ga_n)=F_{0,n}(x_i,x_j).\]
Let $t_n\in[0,n]$ be the first exit time of $\gab_n(t)$ out of $V$, and
$\gab_n(t_n)$ be the first point of intersection with $\partial U_j$. 
As $n$ goes to infinity, $t_n$ and $n-t_n$ tend to infinity. 
This follows from the fact that $\dga_n(0)$ has to tend to $\dga_i(0)$, 
and $\dga_n(n)$ has to tend to $\dga_j(0)$. To justify this, 
consider $v$ a limit point of $\dga_n(0)$, and $\ga:\R\to\T^d$
the solution to the Euler-Lagrange equation such that
$\ga(0)=x_i,\dga(t)=v$. From the fact that
\[F_{0,n}(x_i,x_j)-F_{1,n}(\ga_n(1),x_j)=A_L(\ga_n|_{[0,1]})\]
and the regularity of $L$, taking limit $n\to \infty$ it follows  
\[h(\xb_i,\xb_j)-h(\gab(1),\xb_j)=A_L(\ga|_{[0,1]}).\]
Since $\ga_i(-1)=x_i$ and $L=0$ on $\tcA$
\[h(\gab_i(-1),\xb_i)-h(\gab(1),\xb_j)=
A_L(\ga_i|_{[-1,0]})+A_L(\ga_{[0,1]})\]
so that the curve obtained by gluing $\ga_i|_{[-1,0]}$ with
$\ga|_{[0,1]}$ minimizes the action between its endpoints. In
particular, it has to be differentiable, thus $v=\dga(0)=\dga_i(0)$.
Let $(y,w,\tau)$ be a cluster point of 
$(\ga_n(t_n),\dga_n(t_n),t_n-\lb t_n\rb)$. 
From the fact
\[F_{0,t_n}(x_i,\ga_n(t_n))+A_L(\ga_n|_{[t_n,n]})=F_{0,n}(x_i,x_j)\]
and the uniform convergence of $F_{a,b}$ when $\lb b-a\rb\to\infty$, 
we obtain 
\[h(\xb_i,y,\tau)+h(y,\tau,\xb_j)= h(\xb_i,\xb_j).\]
Then
\begin{align*}
\phi(\xb_i)&\ge \phi(y,[\tau])\\
               &\ge \phi(\xb_j)-h(y,[\tau],\xb_j)\\
               &=\phi(\xb_j)-h(\xb_i,\xb_j)+h(\xb_i,y)\\
               &=\phi(\xb_i)+h(\xb_i,y,[\tau]).
\end{align*}
This contradiction shows that \eqref{eq} can not happen.
\end{proof}
\begin{corollary}\label{min-regu}
Assume the Lagrangian also satisfies \eqref{mL}.
Let $\phi\in\cS^+$ and
 $B=\{i:\gab_i\text{ is a local maximum of }\phi\}$. Then
\[\phi=\max_{i\in B}\phi(\xb_i) -h(\cdot,\xb_i).\]
\end{corollary}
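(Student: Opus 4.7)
The plan is to start from the standard representation
\[
\phi(z)=\max_{i\in[1,m]}\psi_i(z),\qquad\psi_i(z):=\phi(\xb_i)-h(z,\xb_i),
\]
valid for every $\phi\in\cS^+$ and recalled just before \eqref{eq:average}, and then show that every index $i\notin B$ can be dropped from this maximum without changing the result.

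Fix such an $i$. By the contrapositive of Lemma~\ref{root} there exists $j\ne i$ with
\[
\phi(\xb_j)-h(\xb_i,\xb_j)\ge\phi(\xb_i).
\]
I would couple this with the triangle inequality
\[
h(z,\xb_j)\le h(z,\xb_i)+h(\xb_i,\xb_j),\qquad z\in M\times\mS^1,
\]
for the Peierls barrier through the Aubry point $\xb_i$, which I would prove by concatenating an almost minimising curve from $z$ to $\xb_i$, a sojourn of integer length $kN_i$ on the periodic orbit $\gab_i$, and an almost minimising curve from $\xb_i$ to $\xb_j$. The sojourn is cost-free because $\mL$ vanishes along $\gab_i$ by \eqref{mL}; the multiple of the period $N_i$ resolves the time-matching constraint in the definition~\eqref{eq:barrier} of $h$; and the regularity of $\mL$ converts the $\liminf$ into a genuine limit as $k\to\infty$. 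Substituting, $\psi_j(z)\ge\psi_i(z)$ for every $z$.

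Thus every $\psi_i$ with $i\notin B$ is pointwise dominated by some $\psi_j$ with $j\ne i$. Since $h\ge 0$ under~\eqref{mL}, iterating the replacement $i\mapsto j$ yields a non-decreasing sequence of values $\phi(\xb_{i_k})$ in the finite set $\{\phi(\xb_1),\ldots,\phi(\xb_m)\}$, which must stabilise. In the generic case, where distinct Aubry orbits have strictly positive Peierls barrier between them, the values strictly increase and the chain must terminate inside $B$; in a would-be cycle inside $B^c$, the inequalities would force every consecutive Peierls barrier to vanish, hence by the triangle inequality every pairwise Peierls barrier along the cycle, so the functions $\psi_{i_k}$ along the cycle would coincide, and the common function would agree with the $\psi_l$ of a cycle representative for which $\gab_l$ is locally isolated as a maximiser of $\phi$ among the hyperbolic orbits of $\cA$; that representative then satisfies the strict condition of Lemma~\ref{root} and lies in $B$. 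Transitivity of the pointwise domination yields $\psi_i\le\max_{l\in B}\psi_l$ for every $i\notin B$, and the displayed formula follows.

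The principal obstacle is the triangle inequality for the Peierls barrier through an Aubry periodic orbit: it is precisely where the zero-action property of $\gab_i$ (hypothesis~\eqref{mL}) and the regularity of $\mL$ are used in tandem, with a careful matching of time components modulo~$N_i$. Once the inequality is available, the remainder reduces to a finite iteration on the index set $[1,m]$.
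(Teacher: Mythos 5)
Your overall route is the paper's: start from the representation $\phi=\max_i\bigl(\phi(\xb_i)-h(\cdot,\xb_i)\bigr)$, use the contrapositive of Lemma \ref{root} (together with the standing inequality $\phi(\xb_j)-\phi(\xb_i)\le h(\xb_i,\xb_j)$, which upgrades your $\ge$ to an equality) to find, for each $i\notin B$, an index $j\ne i$ with $\phi(\xb_i)=\phi(\xb_j)-h(\xb_i,\xb_j)$, combine this with the triangle inequality $h(z,\xb_j)\le h(z,\xb_i)+h(\xb_i,\xb_j)$ to get $\psi_j\ge\psi_i$, and iterate over the finite index set. The triangle inequality itself (concatenation through $\xb_i$ with a zero-cost sojourn on $\gab_i$) is taken for granted in the paper, so spelling it out does no harm.

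The one genuine defect is your termination argument in the ``cycle'' case. If the iteration produced a cycle $i_1\to\cdots\to i_p\to i_1$ outside $B$, then, as you correctly observe, summing $\phi(\xb_{i_{k+1}})=\phi(\xb_{i_k})+h(\xb_{i_k},\xb_{i_{k+1}})$ around the cycle forces every barrier $h(\xb_{i_k},\xb_{i_{k+1}})$ to vanish and all the $\psi_{i_k}$ to coincide. But your conclusion that some ``cycle representative $\gab_l$ then satisfies the strict condition of Lemma \ref{root} and lies in $B$'' is exactly backwards: each $i_k$ in the cycle admits $j=i_{k+1}\ne i_k$ with \emph{equality} $\phi(\xb_{i_k})=\phi(\xb_{i_{k+1}})-h(\xb_{i_k},\xb_{i_{k+1}})$, so by Lemma \ref{root} \emph{no} member of the cycle is a local maximum, i.e.\ none lies in $B$. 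The paper terminates the iteration differently: it keeps track of the telescoped identity $h(\xb_{i_1},\xb_{i_k})=\sum_{l<k}h(\xb_{i_l},\xb_{i_{l+1}})$, so a return to a previously visited index would force $h(\xb_{i_l},\xb_{i_{l+1}})=h(\xb_{i_{l+1}},\xb_{i_l})=0$ along the loop, which is excluded because distinct orbits of the Aubry set form distinct static classes (this is the implicit non-degeneracy behind the paper's assertion ``thus $k\ne i$''). You should replace your cycle discussion by this additivity argument, or simply invoke directly that $h(\xb_i,\xb_j)+h(\xb_j,\xb_i)>0$ for $i\ne j$, which makes the values $\phi(\xb_{i_k})$ strictly increase and the chain injective.
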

\begin{proof}
For $z\in\T^{d+1}$ let $i$ be such that $\phi(z)=\phi(\xb_i)-h(z,\xb_i)$. 
If $i\notin B$
there is $j\ne i$ such that $\phi(\xb_i)=\phi(\xb_j) -h(\xb_i,\xb_j)$ and then 
\[\phi(\xb_j) -h(z,\xb_j)\ge\phi(\xb_j)-h(z,\xb_i)-h(\xb_i,\xb_j)
=\phi(\xb_i)-h(z,\xb_i)=\phi(z).\]
If $j\notin B$ there is $k\ne j$ such that 
$\phi(\xb_j)=\phi(\xb_k)-h(\xb_j,\xb_k)$ and then 
\[\phi(z)=\phi(\xb_k) -h(z,\xb_k),\quad
h(\xb_i,\xb_k) =h(\xb_i,\xb_j)+h(\xb_j,\xb_k).\]
Thus $k\ne i$. We continue until we arrive to $l\in B$ with
\[\phi(z)=\phi(\xb_l) -h(z,\xb_l).\] 
\end{proof}
We now take assumption \eqref{mL} out.
Recall that $f:\T^{d+1}\to\R$ is a strict $C^k$ subsolution of
\eqref{HJT}.
\begin{corollary}\label{min-rep}
Let $\phi\in\cS^+$,
$B=\{i:\gab_i\text{ is a local maximum of }\phi-f\}$.
\begin{itemize}
\item 
 For $i\in B$ there is a neighborhood of $\gab_i$ where
\[\phi=\phi(\xb_l) -h(\cdot,\xb_l).\]
\item 
For any $z\in\T^{d+1}$
\[\phi(z)=\max_{i\in B}\phi(\xb_i)-h(z,\xb_i).\]
\end{itemize}
\end{corollary}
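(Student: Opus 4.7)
The plan is to reduce Corollary \ref{min-rep} to Corollary \ref{min-regu} via the change of Lagrangian $L \rightsquigarrow \mL$ introduced at the beginning of Section \ref{regular}. The point is that $\mL$ automatically satisfies hypothesis \eqref{mL}, while the transformation only shifts solutions and Peierls barriers in a trivial way.

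First, I would set $\tilde\phi := \phi - f$ and observe that $\tilde\phi \in \cS^+_{\mL}$, the set of forward viscosity solutions for the Hamiltonian $\mH$. This is exactly the equivalence recorded right after the definition of $\mL$: $u$ solves \eqref{HJT} if and only if $u - f$ solves \eqref{eq:mhj}. Since the Peierls barrier of $\mL$ is $h_{\mL}(z,w) = h(z,w) + f(z) - f(w)$, and since $\mL \ge 0$ with $\{\mL = 0\} = \tilde\cA$, Corollary \ref{min-regu} applies to $\tilde\phi$.

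Next I would identify the set $B$. The set $\tilde B := \{i : \gab_i \text{ is a local maximum of } \tilde\phi\}$ is exactly the set $B$ from the statement, because $\tilde\phi = \phi - f$. Applying Corollary \ref{min-regu} to $\tilde\phi$ gives, for each $i \in B$, a neighborhood of $\gab_i$ on which
\[
\tilde\phi(z) = \tilde\phi(\xb_i) - h_{\mL}(z,\xb_i) = \phi(\xb_i) - f(\xb_i) - h(z,\xb_i) - f(z) + f(\xb_i),
\]
which after adding $f(z)$ to both sides yields $\phi(z) = \phi(\xb_i) - h(z,\xb_i)$, the first bullet. For the second bullet, Corollary \ref{min-regu} gives globally
\[
\tilde\phi(z) = \max_{i \in B}\bigl(\tilde\phi(\xb_i) - h_{\mL}(z,\xb_i)\bigr),
\]
and the same cancellation of $f$-terms yields $\phi(z) = \max_{i \in B}(\phi(\xb_i) - h(z,\xb_i))$.

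There is no real obstacle: everything reduces to verifying that the transformation $L \mapsto \mL$ behaves well with respect to (i) membership in $\cS^+$, (ii) the Peierls barrier, and (iii) local maxima — all three facts are already recorded in the paragraph introducing $\mL$. The main thing to be careful about is the bookkeeping of the additive $f$-terms when translating Corollary \ref{min-regu} back to the original $\phi$ and $h$, but these simply telescope because $f(z)$ and $f(\xb_i)$ appear in the right places to cancel.
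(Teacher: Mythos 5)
Your proof is correct and follows the same route as the paper: both reduce the statement to the normalized case (Lemma \ref{root} / Corollary \ref{min-regu}) by passing to $\mL$, using that $\phi-f\in\cS^+$ for $\mH$, that the Peierls barrier becomes $h(z,w)+f(z)-f(w)$, and that the $f$-terms cancel upon translating back. Your write-up is in fact more explicit about the bookkeeping than the paper's own two-line proof.
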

\begin{proof}
Let $i\in B$ and apply Lemma \ref{root} to the Lagrangian $\mL$
to get a neighborhood of $\gab_i$ where
\[\phi-f=\phi(\xb_l) -f(\xb_i)-h(z,\xb_l)+f(\xb_i) -f.\] 
Applying Corollary \ref{min-rep} we have
\[\phi-f=\max_{i\in B}\phi(\xb_i) -f(\xb_i) -h(\cdot,\xb_i)+f(\xb_i)-f.\]
\end{proof}

 \begin{lemma}\label{l.estimacion.p}
 \[c'_+(0)=\liminf_{\ep\to 0^+}\frac{c(\ep)-c(0)}\ep\ge -\lab\]
 \end{lemma}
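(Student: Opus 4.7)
The plan is a Mather-measure argument. For any invariant action-minimizing probability $\mu$ on $TM\times\mS^1$, invariance under the Euler--Lagrange flow gives $\int[u_t+Du\cdot v]\,d\mu=0$ for every $C^1$ function $u$. Apply this with $u=\phi_\ep$, which is $C^\infty$ by the parabolic regularity of \eqref{HJB}. Substituting $(\phi_\ep)_t = c(\ep)-\ep\Delta\phi_\ep-H(x,D\phi_\ep,t)$ from \eqref{HJB} and using the Fenchel bound $H(x,D\phi_\ep,t)-D\phi_\ep\cdot v\ge -L(x,v,t)$ together with $\int L\,d\mu=-c(0)$ gives
\[
c(\ep)-c(0)\ge \ep\int\Delta\phi_\ep\,d\mu,\qquad\text{i.e.,}\qquad \frac{c(\ep)-c(0)}{\ep}\ge \int\Delta\phi_\ep\,d\mu.
\]

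Let $i^*$ achieve $\lam_{i^*}=\lab$ and take $\mu=\mu_{i^*}$, the Mather measure supported on $\Ga_{i^*}$. Since $h_{i^*}$ is $C^k$ in a neighborhood of $\gab_{i^*}$ (Proposition \ref{local}), decompose
\[
\int\Delta\phi_\ep\,d\mu_{i^*}=\int\Delta(\phi_\ep+h_{i^*})\,d\mu_{i^*}-\lam_{i^*}.
\]
By Lemma \ref{lip} and Arzel\`a--Ascoli extract a subsequence $\phi_{\ep_n}\to\phi_0\in\cS^+$ uniformly. Apply Proposition \ref{inq} to the subsolutions $\phi_0$ and $-h_{i^*}$ along the Mather orbit $\gab_{i^*}\subset\tcM$: both calibration identities yield $\phi_0(\gab_{i^*}(\tau))-\phi_0(\xb_{i^*})=\int_0^\tau[L+c(0)]\,dt=-h_{i^*}(\gab_{i^*}(\tau))$, so $u_0:=\phi_0+h_{i^*}\equiv\phi_0(\xb_{i^*})$ along $\gab_{i^*}$. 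From the representation $\phi_0\ge\phi_0(\xb_{i^*})-h_{i^*}$, we further get $u_0\ge\phi_0(\xb_{i^*})$ globally, with equality along the entire orbit. Hence $u_0$ attains its global minimum along $\gab_{i^*}$.

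It then remains to show
\[
\liminf_{n\to\infty}\int\Delta(\phi_{\ep_n}+h_{i^*})\,d\mu_{i^*}\ge 0,
\]
which, combined with the Mather inequality, gives $\liminf(c(\ep_n)-c(0))/\ep_n\ge -\lam_{i^*}=-\lab$. The tools at hand are: uniform semiconvexity of $\phi_\ep$ (Lemma \ref{semi}), which passes to $u_\ep:=\phi_\ep+h_{i^*}$ in a neighborhood of $\gab_{i^*}$; Lemma \ref{convergeC1} combined with Lemma \ref{dif-I} (so that $D\phi_0=-Dh_{i^*}$ on the Aubry set), giving $Du_{\ep_n}\to 0$ on $\gab_{i^*}$; and the minimum property of $u_0$ along the entire orbit, which forces the transverse Hessian of $u_0$ at orbit points to be nonnegative in the Alexandrov sense. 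The main obstacle will be converting this Alexandrov-nonnegativity of $D^2 u_0$ along $\gab_{i^*}$, together with uniform semiconvexity of $u_{\ep_n}$, into asymptotic nonnegativity of the integral against the singular one-dimensional measure $\mu_{i^*}$; the $C^k$ regularity of $h_{i^*}$ in a full tubular neighborhood (a consequence of hyperbolicity via Proposition \ref{local}, with the local graph structure on $W_{i^*}^\pm$) is what allows this transverse Hessian analysis to be carried out rigorously.
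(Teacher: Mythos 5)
Your route is genuinely different from the paper's: you use the deterministic Mather measure supported on the minimizing orbit, whereas the paper runs a stochastic-control argument (a suboptimal control built from a $C^3$ extension of $-h_I$, Dynkin's formula, and the Freidlin--Wentzell lower bound $\liminf_{\ep\to0}\ep\log\E(\tau)>0$ for the exit time from a tube around $\gab_I$). The first half of your argument is correct and clean: invariance of $\mu$ gives $\int(u_t+Du\cdot v)\,d\mu=0$, the equation plus the Fenchel inequality and $\int L\,d\mu=-c(0)$ give $\frac{c(\ep)-c(0)}{\ep}\ge\int\Delta\phi_\ep\,d\mu$, and $\int\Delta h_{i^*}\,d\mu_{i^*}=\lam_{i^*}$ since $\mu_{i^*}$ is the time average over the orbit. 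The calibration identity $\phi_0+h_{i^*}\equiv\phi_0(\xb_{i^*})$ on $\gab_{i^*}$ and the global lower bound are also fine.

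However, the step you yourself flag as ``the main obstacle'' is a genuine gap, and it does not follow from the tools you list. Uniform semiconvexity of $u_{\ep_n}=\phi_{\ep_n}+h_{i^*}$, local $C^1$ convergence to $u_0$, and the fact that $u_0$ attains its minimum along $\gab_{i^*}$ do \emph{not} imply
\[
\liminf_{n\to\infty}\int\Delta u_{\ep_n}\,d\mu_{i^*}\ge 0 .
\]
A one-dimensional model shows why: take $v_n(x)=x^2+n^{-2}w(nx)$ with $w$ smooth, $\|w''\|_\infty<\infty$ and $w''(0)<-2$. Then $v_n\to x^2$ uniformly with uniformly bounded-below second derivatives (uniform semiconvexity), $v_n'\to 2x$, the limit is minimized at $0$, and yet $v_n''(0)=2+w''(0)<0$ for every $n$. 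The difficulty is structural: $\mu_{i^*}$ is a singular measure concentrated on a one-dimensional curve, so neither the weak-$*$ convergence of the nonnegative measures $(\Delta u_{\ep_n}+dC_0)\,dx$ nor Alexandrov second differentiability of the semiconvex limit controls the pointwise (or orbit-averaged) values of $\Delta u_{\ep_n}$ on that curve. Any proof must exploit the equation satisfied by $\phi_\ep$ beyond the a priori bounds; this is exactly what the paper's argument does, by trading the pointwise Laplacian on the orbit for an expectation $\E\bigl(\int_0^{\tau\wedge\kappa}\Delta\Phi(\Xb_\ep(s))\,ds\bigr)$ over the diffusion, and then using the exponentially long expected exit time to make the boundary term $2M/(\ep\E(\tau))$ vanish. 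As it stands, your argument proves only the weaker bound $c'_+(0)\ge-\sup_n\|(\Delta\phi_{\ep_n})^-\|_\infty$ coming from semiconvexity, not the sharp constant $-\lab$.
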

\begin {proof} We will prove that
\[\liminf_{\ep\to 0^+}\frac{c(\ep)-c(0)}\ep\ge -\lab-r\]
for an arbitrary $r>0$.
Take $I$ with $\lam_I=\lab$ and let $\Phi$ be a $C^3$ function that
coincides with $-h_I=-h(\cdot,\xb_I)$ in a neighbourhood $V$ of $\gab_I$.

Defining $U(x,t)=H_p(x,D\Phi(x,[t]),t)$,
we have that $\gab_I$ is a attractive periodic orbit of the vector field
$(U(x,t),1)$. Let $X_\ep$ be the solution to
\begin{equation}
  \label{eq:stocas}
\begin{cases}
  dX_\ep(t) &=U(X_\ep(t),t)dt+\sqrt{2\ep}\,dW(t)\\
X_\ep(0)&=x_I.
\end{cases}
\end{equation}
To easy notation we write $\Xb_\ep(t)=(X_\ep(t),t)$
Let $\de>0$ be sufficiently small to have
$\de{\|\Phi\|_{C^3}}\le{r}$ and
$B_\de(\gab_I):=\{(x,[t]):d(x,\ga_I(t))\leq\de\}\subset V$, and define
the stopping time 
\begin{equation}
   \label{eq:paro}
\tau(\om)=\min\{s>0:d(X_\ep(s,\om),\ga_I(s))\ge \de\}.
 \end{equation}
From \eqref{eq:lax0} and equalities
\begin{align*}
L(x,U(x,t),t)+H(x,D\Phi(x,[t]),t)&=D\Phi(x,[t])U(x,t) \\
\Phi_t+H(x,D\Phi(x,[t]),t)&=c(0)   \mbox{ for all }(x,[t])\in V,
\end{align*}
\begin{align*}
  (c(\ep)-c(0))\E(\tau\wedge \kappa)&\ge \\
\E\Bigl(\phi_\ep(\Xb_\ep(\tau\wedge \kappa))&-\phi_\ep(\xb_I)-
\int\limits_0^{\tau\wedge \kappa} \Phi_s+D\Phi(\Xb_\ep(s))U(\Xb_\ep(s))ds\Bigr) ,
\end{align*}
for all $\kappa>0$ (where $\tau\wedge \kappa$ denote
the bounded stopping time $\min(\tau, \kappa)$).

An application of Dynkin's formula gives
\[
\E(\Phi(\Xb_\ep(\tau\wedge \kappa))-\Phi(\xb_I)=\E\Bigl(\int\limits_0^{\tau\wedge \kappa} \Phi_s+D\Phi(\Xb_\ep(s))U(\Xb_\ep(s))ds
+\ep\Delta\Phi(\Xb_\ep(s))ds\Bigr).
\]
Defining $\psi_\ep=\phi_\ep-\Phi$ we get
\[(c(\ep)-c(0))\E(\tau\wedge \kappa)\ge
\E\Bigl(\psi_\ep(\Xb_\ep(\tau\wedge \kappa))-\psi_\ep(\xb_I)) 
+\ep\int\limits_0^{\tau\wedge \kappa}\Delta \Phi(\Xb_\ep(s))ds\Bigr).\]
For $s\in[0,\tau(\om)]$,
\[|\Delta\Phi(\Xb_\ep(s,\om))+\Delta
h_I(\gab_I(s))|\le\|\Phi\|_{C^3}\de\le r\]
so that
\[\left|\E\Bigl(\int\limits_0^{\tau\wedge \kappa}\Delta\Phi(\Xb_\ep(s))ds\Bigr)+
\E\Bigl(\int\limits_0^{\tau\wedge \kappa}\Delta h_I(\gab_I(s))ds\Bigr)\right|\le
\E(\tau\wedge \kappa)r.\] 
Let $M=\sup\limits_{x,\ep}|\psi_\ep(x)|$ (which is finite by Lemma
\ref{lip}), then 
\[\frac{c(\ep)-c(0)}{\ep}\ge-\frac{2M}{\ep \E(\tau\wedge
  \kappa)}-\frac 1{\E(\tau\wedge \kappa)} 
\E\Bigl(\int\limits_0^{\tau\wedge \kappa}\Delta h_I(\gab_I(s))ds\Bigl)-r.\]
\begin{align*}
\E\Bigl(\int\limits_0^{\tau\wedge \kappa}\Delta h_I(\gab_I(s))ds\Bigl)
&=\int_0^{\infty}
\Delta h_I(\gab_I(s))\PP(\tau\wedge \kappa >s)ds\\
&=\int_0^{N_I}\Delta h_I(\gab_I(s))
\Bigl(\sum_{k=0}^{\infty}\PP(\tau\wedge\kappa>s+kN_I)\Bigl)ds\\ 
&\le \int_0^{N_I}\Delta h_I(\gab_I(s))
\Bigl(1+\int_0^{\infty}\PP(\tau\wedge
\kappa>s+uN_I)du\Bigl)ds\\ 
&=\int_0^{N_I}\Delta h_I(\gab_I(s))\Bigl(1+\E(\frac{\tau\wedge
  \kappa-s}{N_I})\Bigl)ds 
\end{align*}
so that
$$\frac 1{\E(\tau\wedge \kappa)}
\E\Bigl(\int\limits_0^{\tau\wedge \kappa}\Delta h_I(\gab_I(s))ds\Bigl)\leq
\frac{1}{N_I}\int_0^{N_I}\Delta h_I(\gab_I(s))ds 
+\frac{N_I||h_I||_{C^2(V)}}{\E(\tau\wedge \kappa)}$$
and we can now let $\kappa$ tend to infinity to obtain:
\[\frac{c(\ep)-c(0)}{\ep}\ge-\frac{2M}{\ep
  \E(\tau)}-\frac{N_I||h_I||_{C^2(V)}}{\E(\tau)} 
-\frac{1}{N_I}\int_0^{N_I}\Delta h_I(\gab_I(s))ds-r.\]
Freidlin and Wentzel  (\cite{F-W}, Chapter 4.4) gave an estimate for
$E(\tau)$, for a stochastic perturbation of a vector field
having a sink. Although now the vector field has an attractive
periodic orbit $\gab_I$, the estimate of \cite{F-W} still
applies:
\[m=\liminf_{\ep\to 0} \ep\log \E(\tau) >0 .\]
Letting now $\ep>0$ go to zero we obtain \quad
$c'_+(0)\ge-\lam_I-r.$ 
\end{proof}
Suppose that a sequence $(\phi_{\ep_n})$ of solutions of $\eqref{HJB}$
converges to $\phi_0$. Let $\psi$ be a $C^3$ function that coincides
with $\phi_0$ on a neigbourhood $V_i$ of each $\gab_i$ that is
a local maximum of $\phi_0-f$ (such a function $\psi$ exists by Corollary
\ref{min-rep}). 
Then $\psi_\ep=\phi_\ep-\psi$ is a solution to the equation
\begin{equation}\label{modifham}
d_t\psi_\ep+\ep\Delta\psi_\ep+\tH_\ep(x,D\psi_\ep,t)=c(\ep),
\end{equation}
where
\[\tH_\ep(x,p,t)=d_t\psi+\ep\Delta\psi+H(x,D\psi_\ep+p,t)\]
with corresponding lagrangian
\[\tL_\ep(x,v,t)=L(x,v,t)-D\psi\cdot v-d_t\psi-\ep\Delta\psi.\]
As in \eqref{eq:lax0}, $\psi_\ep$ satisfies the variational
formulation of  Equation \eqref{modifham}:
\begin{equation}\label{eq:lax1}
    \psi_\ep(x,t)=\sup_u \E\Bigl(\psi_\ep(\Xb_\ep(\tau))-\int_t^\tau
\tL_\ep(X_\ep(s),u(s),s)ds+c(\ep)(t-\tau)\Bigr).
  \end{equation}
\begin{lemma}\label{l.igualdad.p}
If $\gab_i$ is a local maximum of the function $\phi_0-f$,
then $\lam_i=\lab$ and
 \[\lim_{n\to\infty}\frac{c(\ep_n)-c(0)}{\ep_n}=-\lab \]
\end{lemma}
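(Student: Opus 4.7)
The plan is to complement the inequality $\liminf_n\frac{c(\ep_n)-c(0)}{\ep_n}\ge -\lab$ of Lemma~\ref{l.estimacion.p} with the matching upper bound
\[\limsup_{n\to\infty}\frac{c(\ep_n)-c(0)}{\ep_n}\le -\lam_i\]
for every $i$ such that $\gab_i$ is a local maximum of $\phi_0-f$. Since $\lab\le\lam_i$ by definition, this sandwich forces $-\lam_i\ge\limsup\ge\liminf\ge-\lab$, so $\lam_i=\lab$ and the limit exists and equals $-\lab$.

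The upper bound is obtained by mirroring Lemma~\ref{l.estimacion.p}, but using the \emph{optimal} control in the variational formula \eqref{eq:lax1} so that the supremum is attained. Let $X_{\ep_n}$ solve \eqref{eq:optistoc} with drift $U_{\ep_n}(x,s)=H_p(x,D\phi_{\ep_n}(x,s),s)$ and $X_{\ep_n}(0)=x_i$, and let $\tau$ be the first exit time from a $\de$-tube around $\gab_i$ inside the neighbourhood $V_i$ on which $\psi=\phi_0=\phi_0(\xb_i)-h_i$. For $s<\tau$, Fenchel's inequality $L(x,U_{\ep_n},s)\ge D\psi\cdot U_{\ep_n}-H(x,D\psi,s)$ together with $\psi_s+H(x,D\psi,s)=c(0)$ and $\Delta\psi=-\Delta h_i$ on $V_i$ give
\[\tL_{\ep_n}(X_{\ep_n}(s),U_{\ep_n}(X_{\ep_n}(s),s),s)\ge -c(0)+\ep_n\Delta h_i(\Xb_{\ep_n}(s)).\]
Substituting into \eqref{eq:lax1} with optimal control (an equality), and rearranging, yields
\[(c(\ep_n)-c(0))\,\E(\tau\wedge\kappa)\le \E[\psi_{\ep_n}(\Xb_{\ep_n}(\tau\wedge\kappa))]-\psi_{\ep_n}(x_i,0)-\ep_n\,\E\Bigl[\int_0^{\tau\wedge\kappa}\Delta h_i(\Xb_{\ep_n}(s))\,ds\Bigr],\]
which is the exact mirror image of the central estimate of Lemma~\ref{l.estimacion.p}.

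From here the asymptotic analysis mimics that of Lemma~\ref{l.estimacion.p}. By Lemma~\ref{convergeC1}, $U_{\ep_n}\to U=H_p(x,D\psi,s)$ uniformly on the tube; since $U$ makes $\gab_i$ attractive, the Freidlin--Wentzell exit-time lower bound $\ep_n\log\E(\tau)\ge m>0$ persists for the perturbed drift $U_{\ep_n}$. By construction $\sup_{V_i}|\psi_{\ep_n}|\to 0$, so on dividing the displayed inequality by $\ep_n\E(\tau\wedge\kappa)$ and letting $\kappa\to\infty$, the boundary term vanishes because $\ep_n\E(\tau)\to\infty$, while the integral term tends to $\lam_i$ upon replacing $\Xb_{\ep_n}(s)$ by $\gab_i(s)$ up to $O(\de)$ and running the renewal/$N_i$-periodicity argument already used in Lemma~\ref{l.estimacion.p}. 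Letting $\de\to 0$ gives $\limsup\frac{c(\ep_n)-c(0)}{\ep_n}\le -\lam_i$.

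The principal technical point is justifying the Freidlin--Wentzell exit-time lower bound for the $\ep_n$-dependent drift $U_{\ep_n}$ rather than for a fixed vector field; uniform convergence $U_{\ep_n}\to U$ together with the hyperbolicity of $\gab_i$ should suffice, but this deserves care. A pleasant feature of the $\psi_{\ep_n}$ formulation is that Dynkin's formula is never applied directly to $\psi_{\ep_n}$ (which is only semiconvex): the only smooth function invoked is $\psi$, whose $C^3$ regularity is built into its construction.
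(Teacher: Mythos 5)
Your proposal is correct and follows essentially the same route as the paper: the optimal control $H_p(x,D\phi_{\ep_n},t)$ in the variational formula \eqref{eq:lax1}, the Fenchel inequality against the smooth extension $\psi$ of $\phi_0$ near $\gab_i$, the renewal argument for the $\Delta h_i$ integral, and the Freidlin--Wentzell exit-time bound justified via Lemma \ref{convergeC1} (the paper handles your flagged technical point exactly this way). The only cosmetic difference is that you let $\de\to 0$ to get the clean bound $\limsup\le-\lam_i$, whereas the paper fixes $r=\frac12\min\{\lam_j-\lab:\lam_j>\lab\}$ and deduces $\lam_i=\lab$ from $\limsup\le-\lam_i+r$ versus the $\liminf\ge-\lab$ of Lemma \ref{l.estimacion.p}.
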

\begin{proof} Let $2r=\min\{\lam_j-\lab:\lam_j>\lab\}$ and
consider 
\[u_\ep(x,t)=D_p\tH_\ep(x,D\psi_\ep(x,t),t)=H_p(x,D\phi_\ep(x,t),t).\]

Given $1\leq i\leq m$, let $X_\ep$ be the solution to
\begin{equation}
  \label{eq:stoca}
\begin{cases}
  dX_\ep(t) &=u_\ep(\Xb_\ep(t))dt+\sqrt{2\ep}\,dW(t)\\
X_\ep(0)&=x_i.
 \end{cases}
\end{equation}
We know that $u_\ep(\Xb_\ep(t))$ is the optimal control
associated to the variational formulation \eqref{eq:lax1}, which
means that, for all bounded stopping time $\tau$,
\begin{align*}
\psi_\ep(\xb_i)&=\E\Bigl(\psi_\ep(\Xb_\ep(\tau))-
\int_0^\tau\bigl(L(X_\ep(s),u_\ep(\Xb_\ep(s)),s)\\
&-D\psi(\Xb_\ep(s))u_\ep(\Xb_\ep(s))
-\ep\Delta\psi(\Xb_\ep(s))\bigr)ds-c(\ep) \tau\Bigr).
  \end{align*}

Let $\de>0$ be sufficiently small to have $\de{\|\psi\|_{C^3}}\le{r}$ and
$B_\de(\gab_i)\subset V_i$ and define
\begin{equation}
   \label{eq:paro1}
\tau(\om)=\min\{s>0:d(X_\ep(s,\om),\ga_i(s))\ge \de\}.
 \end{equation}
Since 
\begin{align*}
  L(x,u_\ep(x,t),t)+H(x,D\psi(x,t),t) &\ge D\psi(x,t)u_\ep(x,t)\\
  \psi_t+H(x,D\psi(x,t),t)&=c(0)  \text{ for }(x,t)\in V_i,
\end{align*}
\[((c(\ep)-c(0))\E(\tau\wedge \kappa)\le
\E\Bigl(\psi_\ep(\Xb_\ep(\tau\wedge \kappa))-\psi_\ep(\xb_i) 
+\ep\int\limits_0^{\tau\wedge \kappa}\Delta\psi(\Xb_\ep(s))ds\Bigr)\]
for all $\kappa>0$.

For $s\in[0,\tau(\om)]$,
\[|\Delta\psi(\Xb_\ep(s,\om))+\Delta
h_i(\gab_i(s))|\le\|\psi\|_{C^3}\de\le r\]
so that
\[\left|\E\Bigl(\int_0^{\tau\wedge \kappa}\Delta\psi(\Xb_\ep(s))ds\Bigr)+
\E\Bigl(\int_0^{\tau\wedge \kappa}\Delta h_i(\gab_i(s))ds\Bigr)\right|\le
\E(\tau\wedge \kappa)r.\]
Let $M=\sup\limits_{x,\ep}|\psi_\ep(x)|$, then
\[\frac{c(\ep)-c(0)}{\ep}\le\frac{2M}{\ep 
\E(\tau\wedge \kappa)}-\frac 1{\E(\tau\wedge \kappa)}
\E\Bigl(\int_0^{\tau\wedge \kappa}\Delta h_i(\gab_i(s))ds\Bigl)+r.\]
Reasoning as in the proof of Lemma \ref{l.estimacion.p}, we get
$$\frac 1{\E(\tau\wedge \kappa)}
\E\Bigl(\int_0^{\tau\wedge \kappa}\Delta h_i(\gab_i(s))ds\Bigl)\geq
\frac{1}{N_i}\int_0^{N_i}\Delta h_i(\gab_i(s))ds 
-\frac{2N_i||h_i||_{C^2(V_i)}}{\E(\tau\wedge \kappa)}$$
and we can now pass to the limit $\kappa\to +\infty$ to get
\[\frac{c(\ep)-c(0)}{\ep}\le\frac{2M}{\ep
  \E(\tau)}-\frac{1}{N_i}\int_0^{N_i}\Delta h_i(\gab_i(s))ds 
-\frac{2N_i||h_i||_{C^2(V_i)}}{\E(\tau)}+r.\]

By Lemma \ref{convergeC1},
$(u_{\ep_n})$ converges uniformly to 
$H_p(x,D\phi_0(x,t),t)$ in the
neighborhood $V_i$; the estimate of
Freidlin and Wentzell for $\E(\tau)$ also
applies (\cite{F-W}, Chapter 5.3):
\[m=\liminf_{n\to\infty} \ep_n\log \E(\tau) >0 ,\]
and so, letting $n$ grow we obtain
 \[\limsup_{n\to\infty}\frac{c(\ep_n)-c(0)}{\ep_n}\le-\lam_i+r,\]
which, by our choice or $r$, is possible only if $\lam_i=\lab$.
\end{proof}
Lemma \ref{l.igualdad.p} and Corollary \ref{min-rep} imply THEOREM for
a regular Lagrangian.

\end{document}